\newcommand{\LRA}{\longrightarrow}
\newcommand{\N}{\mathbb{N}} 
\newcommand{\Z}{\mathbb{Z}}
\newcommand{\D}{\mathcal{D}} 
\newcommand{\K}{\mathbb{K}} 
\newcommand{\E}{\mathcal{E}} 
\newcommand{\F}{\mathcal{F}}
\DeclareMathOperator{\id}{id}
\let\leq\leqslant{} 
\let\geq\geqslant{}
\let\subset\subseteq{}  
{}
\newtheorem{lemma}{Lemma}[section] 
\newtheorem{corollary}[lemma]{Corollary}
\newtheorem{theorem}[lemma]{Theorem}
\newtheorem*{theorem*}{Theorem}
\newtheorem{proposition}[lemma]{Proposition}
\theoremstyle{definition}
\newtheorem{definition}[lemma]{Definition}
\newtheorem{example}[lemma]{Example}
\newtheorem{remark}[lemma]{Remark}
\numberwithin{equation}{section} 
\title[Eventual conjugacy]{Balanced strong shift equivalence, balanced in-splits, and eventual conjugacy}
\author[K.A. Brix]{Kevin Aguyar Brix}
\address{School of Mathematics and Applied Statistics, University of Wollongong, Australia}
\email{kabrix.math@fastmail.com}
\keywords{Directed graphs, eventual conjugacy, balanced strong shift equivalence, moves on graphs}
\subjclass[2020]{
37B10 (primary), 
37A55 (secondary),
}
\begin{document}

\begin{abstract}
    We introduce the notion of balanced strong shift equivalence between square nonnegative integer matrices, 
    and show that two finite graphs with no sinks are one-sided eventually conjugate if and only if their adjacency matrices are conjugate to balanced strong shift equivalent matrices.
    Moreover, we show that such graphs are eventually conjugate if and only if one can be reached by the other via a sequence of out-splits and balanced in-splits;
    the latter move being a variation of the classical in-split move introduced by Williams in his study of shifts of finite type.
    We also relate one-sided eventual conjugacies to certain block maps on the finite paths of the graphs.
    These characterizations emphasize that eventual conjugacy is the one-sided analog of two-sided conjugacy.
\end{abstract}
\maketitle


\section{Introduction}
Shifts of finite type are accurately represented as finite essential directed graphs with associated adjacency matrices,
and Williams' seminal paper~\cite{Williams1973} was a deep investigation into two-sided conjugacy of shifts of finite type.
He successfully characterized conjugacy in terms of an equivalence relation on nonnegative integer matrices, \emph{strong shift equivalence}:
a pair of nonnegative integer matrices \textbf{A} and \textbf{B} are elementary equivalent if there is a pair of rectangular nonnegative matrices $(\textbf{R}, \textbf{S})$ satisfying
\[
    \textbf{A} = \textbf{R} \textbf{S}, \quad \textbf{S} \textbf{R} = \textbf{B},
\]
and strong shift equivalence is the transitive closure of elementary equivalence.
Two shifts of finite type are then two-sided conjugate if and only if their corresponding matrices are strong shift equivalent.
This algebraic point of view is intimately related to the notion of shift equivalence (though the two relations are distinct by work of Kim and Roush~\cite{Kim-Roush}).
While shift equivalence is decidable~\cite{Kim-Roush1988}, it is arguably the biggest open problem in symbolic dynamics to determine whether strong shift equivalence is a decidable relation.

An important tool in Williams' work, and one which is central to this paper, is that of state splittings.
In the coordinate-free approach to symbolic dynamics, shifts of finite type are maps on compact metric spaces satisfying a certain Markov condition with respect to a partition of the space.
A state splitting then amounts to refining the partition in a way which is compatible with the map.
Such a procedure produces different matrix representations of the same map which are strong shift equivalent.
At the level of graphs, this amounts to splitting vertices and appropriately distributing the out-going edges (out-splits) or the in-going edges (in-splits).
Williams then showed that any conjugacy can be decomposed into a finite sequence of state splittings.
One formidable consequence was the classification of one-sided shifts of finite type in terms of only out-splits.

Motivated by equivalences of graph $\mathrm{C^*}$-algebras, Bates and Pask~\cite{Bates-Pask2004} generalized these graphical constructions of state splittings
--- which we now call \emph{moves} --- 
to arbitrary directed graphs and showed that whereas out-splits (Move \texttt{(O)}) produce $^*$-isomorphic graph $\mathrm{C^*}$-algebras,
the in-split (Move \texttt{(I)}) only preserves the Morita equivalence class of the graph $\mathrm{C^*}$-algebras.
The coarser relation of \emph{flow equivalence} of the dynamical systems was shown by Parry and Sullivan~\cite{Parry-Sullivan} to be generated by conjugacies and symbol expansions.
This expansion or stretching of time corresponds to a delay move on the graphs which again preserves the Morita equivalence class of the $\mathrm{C^*}$-algebra.
In fact, it was known since the inception of the graph $\mathrm{C^*}$-algebras of finite essential graphs (irreducible and not just a cycle) 
--- the \emph{Cuntz--Krieger algebras} ---
that the stabilized graph $\mathrm{C^*}$-algebra together with its canonical diagonal subalgebra is invariant under flow equivalence of the underlying dynamical systems~\cite{CK80}.

Only much later with a remarkable use of groupoid techniques did Matsumoto and Matui~\cite{MM14} finally prove the converse statement: 
The stable isomorphism class of simple Cuntz--Krieger algebras \emph{together with their diagonal subalgebra} (suitably stabilized)
completely remembers the flow class of the underlying shift of finite type.
It follows that the stable Cuntz--Krieger algebras are diagonally $^*$-isomorphic if and only if the underlying graphs can be connected by a finite sequence of splittings and stretchings.
This was later generalized to all finite essential graphs~\cite{CEOR}.

Moreover, Carlsen and Rout~\cite{Carlsen-Rout} showed that two finite essential graphs $E$ and $F$ are two-sided conjugate if and only if
there is a $^*$-isomorphism $\Psi\colon \mathrm{C^*}(E)\otimes \K \LRA \mathrm{C^*}(F)\otimes \K$ which respects the diagonal subalgebras
$\Psi(\D(E)\otimes c_0) = \D(F)\otimes c_0$ and intertwines the gauge actions $\Psi\circ (\gamma^E\otimes \id) = (\gamma^F\otimes \id)\circ \Psi$.
Here, $\K$ is the $\mathrm{C^*}$-algebra of compact operators on separable Hilbert space and $c_0$ is the $\mathrm{C^*}$-subalgebra of diagonal operators.
Hence this kind of equivalence of graph $\mathrm{C^*}$-algebras is implemented by out-splits and in-splits of the underlying graphs by Williams' result.
A general program for determining which moves on the graphs captures structure-preserving $^*$-isomorphisms of the graph $\mathrm{C^*}$-algebras 
was recently initiated by Eilers and Ruiz~\cite{Eilers-Ruiz}.

The present paper addresses one-sided eventual conjugacy as studied by Matsumoto and its relation to two-sided conjugacy.
The motivation is three-fold.

Firstly, work of Matsumoto~\cite{Mat2017-circle,Mat17-UCOE} and Carlsen and Rout~\cite{Carlsen-Rout} showed that a pair of graphs $E$ and $F$ are \emph{one-sided eventually conjugate} if and only if
there is a $^*$-isomorphism $\Psi\colon \mathrm{C^*}(E) \LRA \mathrm{C^*}(F)$ which respects the diagonal $\Psi(\D(E)) = \D(F)$
and intertwines the gauge actions $\Psi\circ \gamma^E = \gamma^F\circ \Psi$.
This leads us to the observation that one-sided eventual conjugacy is \emph{structurally related} to two-sided conjugacy.
We may also say that eventual conjugacy is the one-sided analog of two-sided conjugacy.
\emph{A fortiori}, one-sided conjugacy is \emph{not} the one-sided analog of two-sided conjugacy in this sense, cf. work of the author and Carlsen~\cite{BC}.

From a dynamical perspective, two-sided conjugacies are sliding block codes with a finite ``window'' (with memory and anticipation)
arising from block maps on finite paths of the graphs.
Similarly, one-sided conjugacies are sliding block codes with anticipation but no memory.
It is possible to make sense of one-sided sliding block codes with both anticipation and memory 
and, in fact, such sliding block codes induce all eventual conjugacies.

\begin{theorem*}[Corollary~\ref{cor:1:1}]
    Let $E$ and $F$ be finite graphs with no sinks.
    There is a one-to-one correspondence between eventual conjugacies $h\colon E^\infty \LRA F^\infty$ and
    $(\ell, c)$-block maps $\psi\colon E^{1 + \ell + c} \LRA F^{1 + \ell}$ satisfying a bijectivity condition.
\end{theorem*}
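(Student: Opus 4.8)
The plan is to extract the correspondence from the block-map characterization of eventual conjugacy established in the body of the paper, and then to upgrade the assignment $\psi\mapsto\psi_\infty$ sending a block map to its induced map on infinite paths into a genuine bijection by showing that, with the delays $\ell$ and $c$ fixed, each side determines the other uniquely. Throughout I use that $E$ and $F$ have no sinks, so that $E^\infty$ and $F^\infty$ are nonempty, compact and totally disconnected, and every finite path extends to an infinite path; this is what lets me move back and forth between finite-path data and the global behaviour of $h$.

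First I would treat the direction from block maps to eventual conjugacies. Given an $(\ell,c)$-block map $\psi\colon E^{1+\ell+c}\LRA F^{1+\ell}$, I define $h=\psi_\infty$ by sliding the window: the finite paths $\psi(x_{[j,\,j+\ell+c]})$ are required to overlap consistently as $j$ varies, and $h(x)$ is the unique infinite path with $h(x)_{[j,\,j+\ell]}=\psi(x_{[j,\,j+\ell+c]})$ for all $j\ge 0$. Continuity of $h$ is automatic, since each output coordinate depends on only finitely many input coordinates; the intertwining relation $\sigma_F^{\ell}\circ h\circ\sigma_E=\sigma_F^{\ell+1}\circ h$ and the analogous relation for the inverse with constant $c$ read off directly from the sliding definition; and the bijectivity condition imposed on $\psi$ is precisely what manufactures a continuous inverse, making $h$ a homeomorphism and hence an eventual conjugacy.

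Next I would treat the reverse assignment. Given an eventual conjugacy $h$ with delays $\ell$ and $c$, iterating $\sigma_F^{\ell}(h(\sigma_E x))=\sigma_F^{\ell+1}(h(x))$ gives $\sigma_F^{\ell}\circ h\circ\sigma_E^{j}=\sigma_F^{\ell+j}\circ h$, so the tail coordinates of $h(x)$ are governed by the delay, while the dependence of $h(x)_{\ell}$ on its argument is finite by continuity and compactness. The crux is to bound this dependence: I expect the main obstacle to be showing that $h(x)_{[0,\ell]}$ depends on $x$ only through the window $x_{[0,\,\ell+c]}$, so that the window length is exactly $1+\ell+c$ and the output length $1+\ell$. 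This is where both delay relations must be used together --- the relation for $h$ to control the coordinates beyond position $\ell$ and the relation for $h^{-1}$ (with constant $c$) to bound the look-ahead --- rather than settling for the uncontrolled finite window produced by bare continuity. Once this is in place, setting $\psi(\mu):=h(x)_{[0,\ell]}$ for any infinite path $x$ extending $\mu\in E^{1+\ell+c}$ yields a well-defined block map, the choice of extension being irrelevant exactly because no sinks guarantees extensions exist and the value ignores coordinates past position $\ell+c$.

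Finally I would verify that the two assignments are mutually inverse, which gives the asserted one-to-one correspondence. Starting from $\psi$, forming $h=\psi_\infty$ and re-extracting recovers $\psi$ because $\psi(\mu)=h(x)_{[0,\ell]}$ holds by construction on extensions of $\mu$; starting from $h$, forming $\psi$ and inducing $\psi_\infty$ recovers $h$ because the two agree on every cylinder and hence, by density and continuity, on all of $E^\infty$. The no-sinks hypothesis enters both checks in the same way and also yields the required uniqueness, so distinct block maps for fixed $\ell,c$ induce distinct eventual conjugacies and conversely. It then remains only to note that the bijectivity condition singled out for $\psi$ corresponds under this assignment exactly to invertibility of $h$ with an eventually conjugate inverse, matching the domain and codomain of the claimed correspondence.
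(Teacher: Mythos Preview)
Your proposal misidentifies the parameter $c$. In the paper, an $(\ell,c)$-conjugacy is an $\ell$-conjugacy $h$ for which $c$ is a \emph{continuity constant} relative to $\ell$, meaning $h(Z_E(x_{[0,k+c]}))\subset Z_F(h(x)_{[0,k]})$ for $k\ge\ell$; this $c$ is produced directly from uniform continuity of $h$ on the compact space $E^\infty$. You instead interpret $c$ as the delay of $h^{-1}$ and propose to use ``both delay relations together'' to force the window down to $x_{[0,\ell+c]}$, explicitly setting aside ``the uncontrolled finite window produced by bare continuity''. But bare continuity is precisely what the paper uses, and the delay of $h^{-1}$ plays no role whatsoever in extracting $\psi$ from $h$: the paper simply sets $\psi(x_{[0,\ell+c]}):=h(x)_{[0,\ell]}$, which is well-defined by the definition of $c$.

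Your proposed mechanism also fails on its merits: the delay of $h^{-1}$ does not bound the look-ahead of $h$. On the full $2$-shift, the map $h(x)_i=x_i+x_{i+n}\pmod 2$ is an involutive $0$-conjugacy, so both $h$ and $h^{-1}=h$ have delay $0$, yet the continuity constant of $h$ is $n$, which can be taken arbitrarily large. Hence no argument using the two intertwining relations alone can produce the window $1+\ell+c$ with your reading of $c$. Once you correct the meaning of $c$, the reverse assignment becomes immediate and the rest of your outline (mutual inversion of $\psi\mapsto h_\psi$ and $h\mapsto\psi_h$, and the match between the bijectivity condition on $\psi$ and bijectivity of $h$) agrees with the paper's proof of Theorem~\ref{thm:1:1}.
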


The connection between two-sided conjugacy and one-sided eventual conjugacy it seems
is present before passing to infinite sequences.

The second motivation is connected to Eilers and Ruiz' recent program of understanding the relationship between moves on graphs 
and structure-preserving $^*$-isomorphisms of graph $\mathrm{C^*}$-algebras~\cite{Eilers-Ruiz}.
We study their variation of Williams' classical in-split called the \emph{balanced in-split} (Move (\texttt{I+})) and show that it induces eventually conjugate graphs.
This new move introduces sources in the graphs which is why we consider finite graphs with no sinks but \emph{potentially with sources}.
We arrive at the following result.

\begin{theorem*}[Theorem~\ref{thm:generated}]
    One-sided eventual conjugacy of finite graphs with no sinks is generated by out-splits and balanced in-splits.    
\end{theorem*}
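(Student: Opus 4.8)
The plan is to follow the blueprint of Williams' decomposition theorem, adapted to the balanced setting, and to lean on the characterization of eventual conjugacy already established in terms of balanced strong shift equivalence. Since the statement asserts the equality of two equivalence relations on finite graphs with no sinks, I would prove the two inclusions separately: that the moves generate only eventual conjugacies, and that every eventual conjugacy is realized by such a sequence.

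The inclusion stating that the moves generate only eventual conjugacies is the soft direction. Here I would simply verify that each elementary move preserves eventual conjugacy and then invoke transitivity. An out-split produces a one-sided \emph{conjugate} graph, hence \emph{a fortiori} an eventually conjugate one, while a balanced in-split (Move~(\texttt{I+})) has already been shown to yield an eventually conjugate graph. Since eventual conjugacy is an equivalence relation, any finite alternating composition of out-splits, balanced in-splits, and their inverse amalgamations connects eventually conjugate graphs, which gives this inclusion at once.

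For the reverse inclusion I would pass to the matrix picture. By the earlier characterization, if $E$ and $F$ are eventually conjugate then their adjacency matrices are conjugate to a pair $\textbf{A}, \textbf{B}$ that are balanced strong shift equivalent. Conjugacy of adjacency matrices is implemented by a relabelling of vertices, that is, a graph isomorphism, which is a degenerate instance of the moves; so it suffices to treat a single elementary balanced equivalence $\textbf{A} = \textbf{R}\textbf{S}$, $\textbf{S}\textbf{R} = \textbf{B}$ and then iterate along the chain that witnesses the balanced strong shift equivalence. The core construction is to form the intermediate graph associated to the factorisation $(\textbf{R}, \textbf{S})$ and to exhibit the graph $E_{\textbf{A}}$ with adjacency matrix $\textbf{A}$ as an out-amalgamation of it, and the graph $E_{\textbf{B}}$ as a balanced in-amalgamation of it; equivalently, to connect $E_{\textbf{A}}$ and $E_{\textbf{B}}$ by a single out-split and a single balanced in-split read in the appropriate directions. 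The balancing condition on the factorisation is precisely what forces the in-side of this construction to be the balanced variant rather than the classical in-split, and it dictates how the new sources are distributed.

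The main obstacle I anticipate is this core decomposition: verifying that the source-introducing combinatorics of the balanced in-split match exactly the algebraic constraint defining balanced elementary equivalence, and that the intermediate graph can be arranged to have no sinks. Careful bookkeeping is required because balanced in-splits create sources, so the entire argument must be carried out in the category of finite graphs with no sinks but possibly with sources, and one must check at each stage that applying an out-split to the intermediate graph recovers $E_{\textbf{A}}$ on the nose while the balanced in-amalgamation recovers $E_{\textbf{B}}$. Once the single-step decomposition is in place, concatenating it along the defining chain completes the argument.
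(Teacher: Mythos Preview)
Your plan has a circular dependency. The characterization of eventual conjugacy in terms of balanced strong shift equivalence (Theorem~\ref{thm:balanced-strong-shift-equivalence}) is proved in the paper \emph{after} and \emph{using} Theorem~\ref{thm:generated}: the forward direction of Theorem~\ref{thm:balanced-strong-shift-equivalence} explicitly invokes Theorem~\ref{thm:generated} together with Proposition~\ref{prop:balanced-elementary-eq} to produce the chain of balanced elementary equivalences. So you cannot quote that characterization as ``earlier'' input here.

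There is also a secondary problem. You write a balanced elementary equivalence as $\textbf{A} = \textbf{R}\textbf{S}$, $\textbf{S}\textbf{R} = \textbf{B}$, but that is ordinary elementary equivalence; the balanced version involves a \emph{triple} $(\textbf{R}_\textbf{A},\textbf{S},\textbf{R}_\textbf{B})$ with $\textbf{A}=\textbf{S}\textbf{R}_\textbf{A}$, $\textbf{B}=\textbf{S}\textbf{R}_\textbf{B}$, $\textbf{R}_\textbf{A}\textbf{S}=\textbf{R}_\textbf{B}\textbf{S}$. Moreover, Proposition~\ref{prop:balanced-elementary-eq} only identifies a balanced elementary equivalence with an elementary balanced in-split when $\textbf{S}$ is an \emph{amalgamation matrix}; for general $\textbf{S}$ the decomposition into moves is the Williams-type theorem you would still have to prove, and you give no indication of how to carry out that step.

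The paper avoids all of this by going through Theorem~\ref{thm:ec-iterated-balanced} instead: an eventual conjugacy is first realized (via out-splits) as an $\ell$-step balanced in-split of a common graph $G$, and the content of the proof of Theorem~\ref{thm:generated} is a direct combinatorial argument showing that an $\ell$-step balanced in-split can be unwound into $2\ell-1$ \emph{elementary} balanced in-splits by inserting auxiliary graphs $E_{(i)}'$ with additional sources. No matrix algebra is used at this stage; the balanced strong shift equivalence picture is a consequence, not an input.
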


The presence of sources is an essential part of the proof even if the graphs we start with do not have sources.
It also follows from the proof that one-sided conjugacy is generated by out-splits alone, cf.~\cite{Williams1973, Boyle-Franks-Kitchens, Kitchens}.
Since work of Carlsen and Rout~\cite{Carlsen-Rout} shows that 
eventual conjugacy of the graphs is equivalent to diagonal-preserving $^*$-isomorphism of the graph algebras which intertwines the gauge actions
--- this is a \texttt{111}-isomorphism in the terminology of~\cite{Eilers-Ruiz} ---
the result characterizes this particular structure-preserving $^*$-isomorphism in terms of Move (\texttt{O}) and Move (\texttt{I+}).

Thirdly, we introduce \emph{balanced strong shift equivalence} of nonnegative integer matrices which is akin to strong shift equivalence.
A pair of matrices \textbf{A} and \textbf{B} are balanced elementary equivalent if there is a triple of rectangular nonnegative integer matrices $(\textbf{R}_\textbf{A}, \textbf{S}, \textbf{R}_\textbf{B})$
satisfying
\[
    \textbf{A} = \textbf{S} \textbf{R}_\textbf{A}, \quad \textbf{R}_\textbf{A} \textbf{S} = \textbf{R}_\textbf{B} \textbf{S} \quad \textbf{B} = \textbf{S} \textbf{R}_\textbf{B},
\]
and balanced strong shift equivalence is the transitive closure of balanced elementary equivalence.
It is an interesting question whether balanced strong shift equivalence is a decidable relation.
As an analog of Williams' algebraic description of conjugacy, we find the following result.

\begin{theorem*}[Theorem~\ref{thm:balanced-strong-shift-equivalence}]
    A pair of finite graphs $E$ and $F$ with no sinks are eventually conjugate if and only if they are conjugate to graphs $E'$ and $F'$, respectively,
    whose adjacency matrices are balanced strong shift equivalent.
\end{theorem*}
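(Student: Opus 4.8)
The plan is to prove both implications by translating between the algebraic relation of balanced strong shift equivalence and the two graph moves that, by Theorem~\ref{thm:generated}, generate one-sided eventual conjugacy. I will rely on two dictionaries. First, that out-splits (Move \texttt{(O)}) and out-amalgamations implement exactly one-sided conjugacy, hence in particular an eventual conjugacy (with $\ell=0$); this is the classical one-sided reduction, cf.~\cite{Williams1973,Boyle-Franks-Kitchens,Kitchens}. Second, that a single balanced in-split (Move \texttt{(I+)}) of a graph with adjacency matrix $\mathbf{A}$ produces a graph whose matrix $\mathbf{C}$ satisfies $\mathbf{A}=\mathbf{S}\mathbf{R}_\mathbf{A}$ and $\mathbf{C}=\mathbf{R}_\mathbf{A}\mathbf{S}$ for the division data $(\mathbf{S},\mathbf{R}_\mathbf{A})$ of the split. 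With these in hand the theorem becomes a normal-form statement: an eventual conjugacy, presented as a finite sequence of Moves \texttt{(O)} and \texttt{(I+)}, can be rearranged as out-splits, then balanced elementary equivalences, then out-splits.

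For the reverse implication, suppose $E$ and $F$ are one-sided conjugate to $E'$ and $F'$ and that $A_{E'}$ is balanced strong shift equivalent to $A_{F'}$. One-sided conjugacy is an eventual conjugacy, so it suffices to treat a single balanced elementary equivalence $\mathbf{A}=\mathbf{S}\mathbf{R}_\mathbf{A}$, $\mathbf{R}_\mathbf{A}\mathbf{S}=\mathbf{R}_\mathbf{B}\mathbf{S}=:\mathbf{C}$, $\mathbf{B}=\mathbf{S}\mathbf{R}_\mathbf{B}$ and conclude that the graphs of $\mathbf{A}$ and $\mathbf{B}$ are eventually conjugate; the general case follows by transitivity. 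The equations exhibit $\mathbf{C}$ as a common balanced in-split of both $\mathbf{A}$ and $\mathbf{B}$ whose splitting data share the same left factor $\mathbf{S}$, and this shared factor is precisely what the word ``balanced'' records. First I would realise the two factorisations by honest Moves \texttt{(I+)} (if necessary via a balanced analogue of Williams' decomposition of nonnegative integer factorisations into elementary division-matrix factorisations~\cite{Williams1973}, to reduce from arbitrary $\mathbf{S},\mathbf{R}_\mathbf{A},\mathbf{R}_\mathbf{B}$ to genuine splits), and then invoke Theorem~\ref{thm:generated} to see that each such move is an eventual conjugacy. Composing, $E'$ and $F'$, and hence $E$ and $F$, are eventually conjugate.

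For the forward implication, Theorem~\ref{thm:generated} gives a finite sequence of out-splits/amalgamations and balanced in-splits/amalgamations from $E$ to $F$. Reading these at the level of adjacency matrices turns each out-move into a classical elementary equivalence and each up-then-down pair of balanced in-moves into a balanced elementary equivalence. The task is to rearrange the sequence into the normal form conjugacy--(balanced strong shift equivalence)--conjugacy. Here the two-sided wrapper $E',F'$ is genuinely necessary rather than cosmetic: balanced strong shift equivalence relates only matrices of the \emph{same} size, whereas every individual move changes the number of vertices, so the out-split conjugacies at both ends are exactly what reconcile the vertex counts of $E$ and $F$ with the common size at which the balanced elementary equivalences operate.

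I expect the main obstacle to be this rearrangement, that is, a commutation lemma allowing an out-split to be interchanged with an adjacent balanced in-split, so that all out-moves can be collected at the two ends while the balanced in-moves regroup into size-preserving balanced elementary equivalences with a shared $\mathbf{S}$. The delicate point is that Move \texttt{(I+)} creates sources, so pushing an out-split past a balanced in-split must track how the split acts on these new sourced vertices; this is the technical reason the development is carried out for graphs with no sinks but possibly with sources. An alternative route that sidesteps the commutation is to start instead from the $(\ell,c)$-block map of Corollary~\ref{cor:1:1}: passing to higher-edge presentations of $E$ and $F$ (which are one-sided conjugacies) recodes the block map to a $1$-block eventual conjugacy between $E'$ and $F'$, from which the matrices $\mathbf{S}$, $\mathbf{R}_{E'}$ and $\mathbf{R}_{F'}$ witnessing the balanced elementary equivalence can be read off directly.
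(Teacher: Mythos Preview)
Your forward direction is overcomplicated in a way that creates a phantom obstacle. The proof of Theorem~\ref{thm:ec-iterated-balanced} already produces the normal form you are trying to manufacture: $E$ and $F$ are first carried by out-splits to $E_{(\ell)}$ and $F_{(\ell)}$, and then the proof of Theorem~\ref{thm:generated} connects $E_{(\ell)}$ to $F_{(\ell)}$ by a chain of \emph{elementary} balanced in-splits only, with no interspersed out-moves. Proposition~\ref{prop:balanced-elementary-eq} then converts each link of that chain into a balanced elementary equivalence. So no commutation lemma is needed; the paper simply quotes Theorem~\ref{thm:generated} and Proposition~\ref{prop:balanced-elementary-eq} and is done.

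Your reverse direction has a real gap, and the paper proceeds quite differently. Proposition~\ref{prop:balanced-elementary-eq} characterises elementary balanced in-splits by a triple $(\mathbf{R}_E,\mathbf{S},\mathbf{R}_F)$ in which $\mathbf{S}$ is an \emph{amalgamation matrix}, whereas the definition of balanced elementary equivalence allows $\mathbf{S}$ to be an arbitrary nonnegative integer matrix. Your plan to ``realise the two factorisations by honest Moves \texttt{(I+)}'' therefore requires exactly the ``balanced analogue of Williams' decomposition'' you gesture at, and that analogue is not proved anywhere in the paper; one would have to decompose $\mathbf{S}$ into a product of amalgamation matrices while simultaneously maintaining the balance condition $\mathbf{R}_\mathbf{A}\mathbf{S}=\mathbf{R}_\mathbf{B}\mathbf{S}$ at every stage, which is not obviously possible. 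The paper sidesteps this entirely: given an arbitrary balanced elementary equivalence it builds, in the style of~\cite[Section~7.2]{LM}, an auxiliary graph with $E$-, $F$-, $G$-, $\mathbf{R}_E$-, $\mathbf{R}_F$- and $\mathbf{S}$-edges, and from the bijections encoded by the matrix identities reads off a compatible $(1,1)$-block map $\psi\colon E^3\to F^2$. By Corollary~\ref{cor:1:1} this yields the eventual conjugacy directly, without ever passing through Move \texttt{(I+)}. Your closing ``alternative route'' is in spirit closer to what the paper does, but the concrete mechanism is this bipartite construction rather than a higher-block recoding.
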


Even if the present work is in part motivated by operator algebraic questions, the approach is purely dynamical.
Hopefully, a thorough investigation of one-sided eventual conjugacy can also shed light on questions concerning two-sided conjugacy as well.

\section*{Acknowledgements}
This work was carried out while the author was a PhD student at the University of Copenhagen
and supported by the Danish National Research Foundation through the Centre for Symmetry and Deformation (DNRF92)
and finished at the University of Wollongong during a Carlsberg Foundation Internationalisation Fellowship.
A previous version of this paper appeared in the author's PhD thesis.
I would like to thank Søren Eilers and Efren Ruiz for valuable comments on an earlier draft of this paper,
and Aidan Sims and the University of Wollongong for their hospitality during my visit.

\section{Preliminaries}

We recall the relevant concepts of finite graphs and the dynamics on their path spaces and fix notation.
Let $\Z$,  $\N = \{0, 1, 2,\ldots\}$ and $\N_+ = \{1, 2,\ldots\}$ denote the integers, the nonnegative integers and the positive integers, respectively.

A \emph{directed graph} (or just a graph) is a quadrupel $E = (E^0, E^1, r_E, s_E)$ where $E^0$ is the set of \emph{vertices},
$E^1$ is the set of \emph{edges} and $s_E, r_E\colon E^1\LRA E^0$ are the \emph{source} and \emph{range} maps, respectively.
We shall omit the subscripts to simplify notation when the graph is understood.
A vertex $v\in E^0$ is a \emph{source} if $r^{-1}(v) = \emptyset$ and a \emph{sink} if $s^{-1}(v) = \emptyset$. \\

\textbf{Assumption.} \emph{In this paper we consider only finite graphs with no sinks}. \\

A \emph{finite path} is a string $\mu = \mu_1 \cdots \mu_n$ of edges
where $n\in \N_+$ and $r(\mu_i) = s(\mu_{i+1})$ for $i = 1,\ldots n-1$.
The \emph{length} of $\mu$ is $|\mu| = n$.
By convention, the length of a vertex is zero.
Let $E^n$ be all finite paths of length $n$ and let $E^* = \bigcup_{n \in \N} E^n$ be the collection of all finite paths.
The source and range maps naturally extend to $E^*$ by $s(\mu) = s(\mu_1)$ and $r(\mu) = r(\mu_{|\mu|})$.
The \emph{path space} of $E$ is the set
\[
    E^\infty = \{ x \in {(E^1)}^\N \mid  r(x_i) = s(x_{i + 1}), i\in \N\}
\]
of all infinite paths of edges on the graph.
The path space $E^\infty$ is compact and Hausdorff in the subspace topology of the product topology on ${(E^1)}^\N$ where $E^1$ is discrete.
The source map extends to $E^\infty$ by putting $s(x) = s(x_0)$ for $x\in E^\infty$.
Given $x\in E^\infty$ we write $x_{[i, j]} = x_i x_{i + 1} \cdots x_j$ for $0 \leq i \leq j$, and put
$x_{[i, j)} = x_{[i, j - 1]}$ and $x_{(i, j]} = x_{[i + 1, j]}$ when $i < j$.
Note that any finite path $\mu \in E^*$ is of the form $x_{[i, i + |\mu|)}$ for some $x\in E^\infty$ and $i\in \N$.
The \emph{cylinder set} of a finite path $\mu$ is the compact open set
\[
    Z(\mu) = \{ x\in E^\infty \mid x_{[0, |\mu|)} = \mu \},
\]
and the collection of cylinder sets constitutes a basis for the topology of $E^\infty$.

Define a \emph{shift operation} $\sigma_E\colon E^\infty \LRA E^\infty$ by $\sigma_E(x)_i = x_{i + 1}$ for $x\in E^\infty$ and $i\in \N$.
This is a local homeomorphism, and it is surjective if and only if $E$ contains no sources.
The dynamical system $(E^\infty, \sigma_E)$ is the \emph{edge shift} of $E$.
Two edge shifts $(E^\infty, \sigma_E)$ and $(F^\infty, \sigma_F)$ are \emph{conjugate} if there exists a homeomorphism $h\colon E^\infty \LRA F^\infty$
such that $h\circ \sigma_E = \sigma_F\circ h$.
In this case we say the graphs $E$ and $F$ are conjugate and that $h$ is a conjugacy.
Edge shifts are examples of shifts of finite type and, in fact, every shift of finite type is (conjugate to) an edge shift of a finite graph~\cite[Section 2]{LM}.

Let $E$ be a graph.
The \emph{$N$'th higher block graph} of $E$ is the graph $E^{[N]}$ with edge set $E^N$ and vertex set $E^{N - 1}$.
The source and range of $\mu = \mu_1 \cdots \mu_N\in E^N$ is $s_N(\mu) = \mu_{[1, N)}$ and $r_N(\mu) = \mu_{(1, N]}$.
Note that $E^{[1]} = E$, and that for any $N$ there is a canonical conjugacy $\varphi\colon E^\infty \LRA {(E^{[N]})}^\infty$ 
given by $\varphi(x) = x_{[0, N)} x_{[1, N + 1)} x_{[2, N + 2)}\cdots$ for $x\in E^\infty$.

\section{Eventual conjugacy and block maps}
\label{sec:block-maps}

Let us fix two finite graphs with no sinks $E$ and $F$ and let $E^\infty$ and $F^\infty$ be their infinite path spaces, respectively.

\subsection{Eventual conjugacy}

A \emph{sliding block code} $\varphi\colon E^\infty \LRA F^\infty$ is a continuous map which intertwines the shift operations
in the sense that $\varphi\circ \sigma_E = \sigma_F\circ \varphi$.
A \emph{conjugacy} is a bijective sliding block code.
In order to describe eventual conjugacies on graphs we first modify the notion of sliding block codes slightly.

\begin{definition}
    Let $\ell\in \N$.
    An $\ell$-\emph{sliding block code} is a continuous map $\varphi\colon E^{\infty}\LRA F^\infty$ such that $\sigma_E^\ell\circ \varphi$
    is a sliding block code, that is,
    \[
        \sigma_F^{\ell + 1}(\varphi(x)) = \sigma_F^\ell(\varphi(\sigma_E(x))),
    \]
    for every $x\in E^{\infty}$.
    If $\ell = 0$, then $\varphi$ is a sliding block code in the usual sense.
    An \emph{$\ell$-conjugacy} is a bijective $\ell$-sliding block code.
\end{definition}

An $\ell$-conjugacy is simply one half of an eventual conjugacy.

\begin{definition}[\cite{Mat2017-circle}]
    Two finite graphs with no sinks $E$ and $F$ are \emph{eventually conjugate} if there exist a homeomorphism $h\colon E^\infty\LRA F^\infty$ 
    and $\ell, \ell'\in \N$ such that $h$ and $h^{-1}$ are $\ell$- and $\ell'$-conjugacies, respectively.
    That is, 
    \begin{align*}
        \sigma_F^{\ell + 1}(h(x)) &= \sigma_F^\ell(h(\sigma_E(x))), \\
        \sigma_E^{\ell' + 1}(h^{-1}(y)) &= \sigma_E^{\ell'}(h^{-1}(\sigma_F(y)))     ,
    \end{align*}
    for $x\in E^\infty$ and $y\in F^\infty$.
    We say that $h$ is an \emph{eventual conjugacy}.
\end{definition}

If $h\colon E^\infty \LRA F^{\infty}$ is an $\ell$-conjugacy, then $h$ is an $(\ell + i)$-conjugacy for all $i\in \N$, and
if $h'\colon F^\infty\LRA G^\infty$ is an $\ell'$-conjugacy, then $h'\circ h$ is an $(\ell + \ell')$-conjugacy.
Furthermore, if $h$ is an eventual conjugacy, then there is an $\ell\in \N$ such that $h$ and $h^{-1}$ are $\ell$-conjugacies.

Let $\ell\in \N$ and suppose $h\colon E^\infty\LRA F^\infty$ is an $\ell$-sliding block code.
Then there is a $c\in \N$ such that
\begin{equation}\label{eq:continuity-constant}
    h(Z_E(x_{[0, k + c]})) \subset Z_F({h(x)}_{[0, k]})
\end{equation}
for $k\geq \ell$ and $x\in E^\infty$.
We shall refer to $c$ as a \emph{continuity constant for $h$ (relative to $\ell$)}.   

Note that if $c$ is a continuity constant for $h$ (relative to $\ell$), then $c + i$ is a continuity constant for $h$ for any $i\in \N$.
Moreover, if $h$ and $h^{-1}$ are $\ell$- and $\ell'$-conjugacies, then we can choose a continuity constant relative to $\max\{\ell, \ell'\}$.

\begin{definition}
    An \emph{$(\ell, c)$-sliding block code} is an $\ell$-sliding block code $\varphi\colon E^\infty\LRA F^\infty$ with continuity constant $c$ relative to $\ell$.
    An \emph{$(\ell, c)$-conjugacy} is a bijective $(\ell, c)$-sliding block code.
\end{definition}

The following lemma shows that we can always reduce the continuity complexity of an $\ell$-conjugacy,
at the expense of increasing the continuity complexity of the inverse.

\begin{lemma}
    Let $E$ and $F$ be finite graphs with no sinks and let $h\colon E^\infty\LRA F^\infty$ be an $(\ell, c)$-conjugacy.
    There exists a graph $\bar{E}$ with a conjugacy $\varphi\colon E^\infty \LRA {\bar{E}}^\infty$
    and an $(\ell, 0)$-conjugacy $\bar{h}\colon \bar{E}^\infty\LRA F^\infty$ satisfying $\bar{h}\circ \varphi = h$.
\end{lemma}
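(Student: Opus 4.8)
The plan is to use the higher block graph construction to absorb the continuity constant $c$ into the graph itself, thereby trading the continuity complexity of $h$ for structural information encoded in a conjugate graph. The natural candidate is $\bar{E} = E^{[c+1]}$, the $(c+1)$'th higher block graph, together with the canonical conjugacy $\varphi\colon E^\infty \LRA \bar{E}^\infty$ from the preliminaries. The intuition is that an $(\ell,c)$-conjugacy sees $c$ extra coordinates ahead before deciding on an output coordinate; once we recode $E^\infty$ so that each new symbol already packages $c+1$ consecutive old edges, that look-ahead becomes immediately available and the continuity constant should drop to zero.

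First I would define $\bar{h} = h \circ \varphi^{-1}\colon \bar{E}^\infty \LRA F^\infty$, which automatically satisfies $\bar{h}\circ\varphi = h$ and is a homeomorphism since both $h$ and $\varphi$ are. The content is then to verify that $\bar{h}$ is an $(\ell,0)$-conjugacy. Since $\varphi$ is a genuine conjugacy it intertwines the shift operations exactly, so $\bar{h}$ inherits the $\ell$-sliding block code property from $h$: one checks that $\sigma_F^{\ell+1}(\bar{h}(\bar{x})) = \sigma_F^\ell(\bar{h}(\sigma_{\bar{E}}(\bar{x})))$ by unwinding the definition of $\bar{h}$ through $\varphi$ and using both the $\ell$-sliding block identity for $h$ and the exact intertwining $\varphi\circ\sigma_E = \sigma_{\bar{E}}\circ\varphi$. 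This step is essentially formal bookkeeping.

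The heart of the argument is the continuity estimate, i.e. showing that $\bar{h}$ admits continuity constant $0$ relative to $\ell$, namely $\bar{h}(Z_{\bar{E}}(\bar{x}_{[0,k]})) \subset Z_F(\bar{h}(\bar{x})_{[0,k]})$ for $k\geq \ell$. Here I would translate a cylinder set in $\bar{E}^\infty$ back through $\varphi^{-1}$ into a cylinder set in $E^\infty$. The key observation is that specifying the first $k+1$ symbols of $\bar{x}\in\bar{E}^\infty$, i.e. the edges $\bar{x}_0,\ldots,\bar{x}_k$ of $E^{[c+1]}$, amounts to specifying the first $k+1+c$ edges $x_{[0,k+c]}$ of the underlying path $x = \varphi^{-1}(\bar{x})$. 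Thus $\varphi^{-1}(Z_{\bar{E}}(\bar{x}_{[0,k]})) = Z_E(x_{[0,k+c]})$, and applying the continuity constant $c$ for $h$ from~\eqref{eq:continuity-constant} yields exactly the desired containment with continuity constant $0$ for $\bar{h}$.

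The main obstacle to watch is the index arithmetic in this last step: one must be careful that the correspondence between the $k+1$ symbols of the higher block path and the $k+1+c$ edges of the original path is exactly aligned, and that the condition $k\geq\ell$ is correctly propagated, since the definition of the higher block graph shifts lengths by one. Apart from this indexing care, no genuine difficulty arises, as the construction merely repackages the look-ahead data of $h$ into the symbols of $\bar{E}$ without altering the map $h$ itself.
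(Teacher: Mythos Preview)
Your proposal is correct and follows essentially the same approach as the paper: pass to a higher block graph via the canonical conjugacy $\varphi$ and set $\bar h = h\circ\varphi^{-1}$, then translate cylinder sets through $\varphi^{-1}$ to invoke the continuity constant of $h$. The only discrepancy is that you take $\bar E = E^{[c+1]}$ while the paper writes $\bar E = E^{[c]}$; under the paper's conventions (where an edge of $E^{[N]}$ is a path of length $N$, so fixing $\bar x_{[0,k]}$ fixes $x_{[0,k+N-1]}$) your choice is the one that makes the identity $\varphi^{-1}(Z_{\bar E}(\bar x_{[0,k]})) = Z_E(x_{[0,k+c]})$ come out exactly, so your caution about the index arithmetic is well placed.
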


\begin{proof}
    Consider the higher block graph $\bar{E} = E^{[c]}$ and let $\varphi\colon E^\infty\LRA {\bar{E}}^\infty$ be the canonical conjugacy.
    Define $\bar{h}\colon \bar{E}^\infty\LRA F^\infty$ as $\bar{h} = h\circ \varphi^{-1}$, that is,
    \[
        \bar{h}(x_{[0, c]} x_{[1, c + 1]} \cdots) = h(x),
    \]
    for $x\in E^\infty$.
    Then $\bar{h}$ is an $\ell$-conjugacy.
    Since $\bar{h} = h\circ \varphi^{-1}$ and $c$ is a continuity constant for $h$, we have 
    \[
        \bar{h}(Z_{\bar{E}}(\bar{x}_{[0, \ell]})) = h(Z_E(x_{[0, \ell + c]})) \subset Z_F(h(x)_{[0, \ell]})
    \]
    for $\bar{x}\in {\bar{E}}^\infty$ with $\varphi^{-1}(\bar{x}) = x\in E^\infty$.
    Hence $\bar{h}$ is an $(\ell, 0)$-conjugacy.
\end{proof}

\subsection{Block maps}

\begin{definition}
    Let $E$ and $F$ be finite graphs with no sinks and let $\ell, c\in \N$.
    An \emph{$(\ell, c)$-block map} is a map $\psi\colon E^{1 + \ell + c}\LRA F^{1 + \ell}$ which is \emph{compatible with $E$ and $F$}
    in the sense that 
    \[
        {\psi(x_{[0, \ell + c]})}_\ell {\psi(x_{[1, 1 + \ell + c]})}_{\ell}\in F^2,
    \]
    for $x\in E^{\infty}$.
\end{definition}

Let $\psi\colon E^{1 + \ell + c}\LRA F^{1 + \ell}$ be an $(\ell, c)$-block map.
The compatibility condition ensures that there is an extension $\psi\colon E^{1 + \ell + c + i} \LRA F^{1 + \ell + i}$ given by
\[
  \psi(x_{[0, \ell + c + i]}) = \psi(x_{[0, \ell + c]}) {\psi(x_{[1, \ell + c + 1]})}_\ell \cdots {\psi(x_{[i, \ell + c + i]})}_\ell,
\]
whenever $x\in E^\infty$ and $i\in \N$.
We shall use the same symbol $\psi$ for the block map and its extension, this should cause no confusion.
Iterating this process \emph{ad infinitum}, $\psi$ extends to a map on the infinite path spaces $h = h_\psi\colon E^\infty\LRA F^\infty$ by
\[
    h(x) = \psi(x_{[0, \ell + c]}) {\psi(x_{[1, \ell + c + 1]})}_\ell \cdots {\psi(x_{[i, \ell + c + i]})}_\ell \cdots,
\]
for $x\in E^\infty$.
In order to see that $h$ is continuous, suppose $x^{(n)}\LRA x$ in $E^\infty$ as $n\LRA \infty$.
Given $i\in \N$ there exists $N\in \N$ such that $x^{(n)}\in Z_E(x_{[0, \ell + c + i]})$ whenever $n\geq N$.
Then
\[
    {h(x^{(n)})}_{[0, \ell + i]} = \psi({x^{(n)}}_{[0, \ell + c + i]}) = \psi(x_{[0, \ell + c + i]}) = {h(x)}_{[0, \ell + i]},
\]
so $h(x^{(n)}) \LRA h(x)$ in $F^\infty$ as $n\LRA \infty$.
In particular, $c$ is a continuity constant for $h$ relative to $\ell$.
We would now like to find conditions on the block maps which ensure that the induced map on the path spaces is bijective.

\begin{definition}
    An $(\ell, c)$-block map $\psi\colon E^{1 + \ell + c}\LRA F^{1 + \ell}$ satisfies the \emph{surjectivity condition}
    if for every $k\geq \ell$ and every $\beta\in F^{1 + k}$, there exists $\alpha\in E^{1 + k + c}$ such that $\psi(\alpha) = \beta$.
    On the other hand, $\psi$ satisfies the \emph{injectivity condition} if for every $k\geq \ell$ there is $K\in \N$ such that
    \[
      \psi(x_{[0, k + K]}) = \psi(x'_{[0, k + K]}) \implies x_{[0, k]} = x'_{[0, k]},
    \]
    for every $x, x'\in E^\infty$.
    We say that $\psi$ satisfies the \emph{bijectivity condition} if it satisfies both the injectivity and the surjectivity conditions.
\end{definition}

\begin{remark}
    Note that for the case $c = 0$, the surjectivity condition reduces to surjectivity of the block map
    while the injectivity condition is, in general, weaker than injectivity of the block map.
    The term \emph{bijectivity condition} is only meant to reflect the fact that the induced map on the infinite path spaces is bijective (see the proof below).
    If $\psi\colon E^{1 + \ell} \LRA F^{1 + \ell}$ is in fact a \emph{bijective} block map,
    then the inverse homeomorphism ${h_\psi}^{-1}\colon E^\infty \LRA F^\infty$ is induced by $\psi^{-1}$
    and has vanishing continuity constant.
\end{remark}

We now arrive at the main result of this section.

\begin{theorem}\label{thm:1:1}
    Let $E$ and $F$ be finite graphs with no sinks and let $\ell, c\in \N$.
    There is a one-to-one correspondence between the collection of $(\ell, c)$-block maps $\psi\colon E^{1 + \ell + c}\LRA F^{1 + \ell}$ 
    and the collection of $(\ell, c)$-sliding block codes $h\colon E^\infty\LRA F^\infty$.
    In addition, $\psi$ satisfies the injectivitiy condition or the surjectivity condition
    if and only if $h$ is injective or surjective, respectively.
\end{theorem}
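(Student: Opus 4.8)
The plan is to construct the correspondence explicitly in both directions and then verify the four ``condition versus global property'' equivalences separately. The passage from an $(\ell,c)$-block map $\psi$ to an $(\ell,c)$-sliding block code is essentially the content of the discussion preceding the theorem: the induced map $h_\psi$ is continuous with continuity constant $c$ relative to $\ell$, and a short computation with the extension formula, using $\psi(x_{[j,\ell+c+j]})_\ell = h_\psi(\sigma_E^j x)_\ell$, shows $\sigma_F^{\ell+1}(h_\psi(x)) = \sigma_F^\ell(h_\psi(\sigma_E x))$, so that $h_\psi$ is indeed an $\ell$-sliding block code. For the reverse passage, given an $(\ell,c)$-sliding block code $h$ I would set
\[
    \psi(\mu) := h(x)_{[0,\ell]}, \qquad \mu\in E^{1+\ell+c},
\]
for any $x\in Z_E(\mu)$; such $x$ exists because $E$ has no sinks, and independence of the choice of $x$ is exactly the inclusion~\eqref{eq:continuity-constant} with $k=\ell$. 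Compatibility of $\psi$ with $E$ and $F$ reduces to checking $r(h(x)_\ell)=s(h(\sigma_E x)_\ell)$, which follows from $h(x)_{\ell+1}=h(\sigma_E x)_\ell$ (the $0$th coordinate of the $\ell$-sliding block relation) since $h(x)_\ell\,h(x)_{\ell+1}$ are consecutive edges of $h(x)$.

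Next I would check that the two assignments are mutually inverse. Starting from $\psi$, forming $h_\psi$, and reading off the block map recovers $\psi$ immediately since $h_\psi(x)_{[0,\ell]}=\psi(x_{[0,\ell+c]})$. Starting from $h$, the point is that the extended block map satisfies $\psi(x_{[0,\ell+c+i]})=h(x)_{[0,\ell+i]}$ for all $i$; this follows once one observes, by iterating $h(y)_{\ell+1}=h(\sigma_E y)_\ell$, that $\psi(x_{[j,\ell+c+j]})_\ell = h(\sigma_E^j x)_\ell = h(x)_{\ell+j}$, and then passing to the limit gives $h_\psi=h$. The key bookkeeping identity for everything that follows is that, for $k\geq\ell$, the extension of $\psi$ satisfies
\[
    \psi(x_{[0,k+c]}) = h(x)_{[0,k]}.
\]

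For the bijectivity statements I would treat the four implications in turn, using this identity to translate between $\psi$ and $h$. That $h$ surjective implies the surjectivity condition, and that the injectivity condition implies $h$ injective, are direct: in the first case, given $\beta\in F^{1+k}$ extend it to $y\in F^\infty$ (no sinks in $F$), pull back to $x$ with $h(x)=y$, and take $\alpha=x_{[0,k+c]}$; in the second, $h(x)=h(x')$ forces $\psi(x_{[0,k+K]})=\psi(x'_{[0,k+K]})$ for all $k,K$, hence $x_{[0,k]}=x'_{[0,k]}$ for all $k\geq\ell$. The remaining two implications are where compactness of $E^\infty$ enters. For ``surjectivity condition $\Rightarrow$ $h$ surjective'', given $y\in F^\infty$ I would choose for each $k$ a preimage path $\alpha^{(k)}\in E^{1+k+c}$ of $y_{[0,k]}$, extend it to $x^{(k)}\in E^\infty$, pass to a convergent subsequence $x^{(k)}\to x$, and use continuity to conclude $h(x)=y$.

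The step I expect to be the main obstacle is the implication ``$h$ injective $\Rightarrow$ injectivity condition'', because the injectivity condition demands a \emph{uniform} bound $K$ for each $k$, whereas injectivity of $h$ is only a pointwise statement. I would argue by contradiction: if no such $K$ existed for some $k\geq\ell$, then for every $m\geq k$ there would be $x^{(m)},x'^{(m)}$ with $h(x^{(m)})_{[0,m]}=h(x'^{(m)})_{[0,m]}$ but $x^{(m)}_{[0,k]}\neq x'^{(m)}_{[0,k]}$. Using compactness of $E^\infty$ and finiteness of $E^{1+k}$, I would extract subsequences with $x^{(m)}\to x$, $x'^{(m)}\to x'$ and with $x^{(m)}_{[0,k]}\equiv\mu\neq\mu'\equiv x'^{(m)}_{[0,k]}$ constant, so that $x\neq x'$. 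Continuity of $h$ together with the agreement of $h(x^{(m)})$ and $h(x'^{(m)})$ on longer and longer prefixes would then force $h(x)=h(x')$, contradicting injectivity. This manufacturing of a uniform modulus out of a compactness argument is the crux; everything else is a straightforward translation through the identity $\psi(x_{[0,k+c]})=h(x)_{[0,k]}$.
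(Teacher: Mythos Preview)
Your proof is correct and follows essentially the same route as the paper: the constructions $\psi\mapsto h_\psi$ and $h\mapsto\psi$, the verification that they are mutually inverse via the identity $\psi(x_{[0,k+c]})=h(x)_{[0,k]}$, and three of the four injectivity/surjectivity implications are handled identically. The only difference is in the step you flagged as the main obstacle, ``$h$ injective $\Rightarrow$ injectivity condition'': where you argue by contradiction and sequential compactness, the paper instead notes that $h$ is a homeomorphism onto its image (continuous bijection from compact to Hausdorff), takes a continuity constant $c_\phi$ for the inverse $\phi=h^{-1}$ at level $k$, and sets $K=c+c_\phi$ directly---both arguments are just packaging the uniform continuity of $h^{-1}$.
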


\begin{proof}
    Suppose $\psi\colon E^{1 + \ell + c}\LRA F^{1 + \ell}$ is an $(\ell, c)$-block map.
    Let $h = h_\psi\colon E^\infty \LRA F^\infty$ be the continuous extension to the infinite path spaces given by
    \[
      {h(x)}_{[0, k]} = \psi(x_{[0, k + c]}),
    \]
    for $x\in E^\infty$ and $k\geq \ell$.
    Note that $c$ is a continuity constant for $h$ relative to $\ell$.
    If $ax\in E^\infty$, then
    \begin{align*}
        \sigma_F^{\ell + 1}(h(ax)) 
        &= \sigma_F^{\ell + 1}
        \Big( 
        {\psi( {(ax)}_{[0, \ell + c]})} {\psi( {(ax)}_{[1, \ell + c + 1]})}_{\ell} \cdots {\psi( {(ax)}_{[i, \ell + c + i]})}_{\ell} \cdots 
        \Big) \\
        &= {\psi( {(ax)}_{[1, 1 + \ell + c]})}_{\ell} \cdots {\psi( {(ax)}_{[i, \ell + c + i]})}_{\ell}\cdots \\
        &= {\psi( x_{[0, \ell + c]})}_{\ell} \cdots {\psi( x_{[i, \ell + c + i]})}_{\ell}\cdots \\
        &= \sigma_F^\ell
        \Big( 
        \psi(x_{[0, \ell + c]}) \cdots {\psi( x_{[i, \ell + c + i]})}_{\ell}\cdots 
        \Big) \\
        &= \sigma_F^\ell(h(x)).
    \end{align*}
    Hence $h$ is an $(\ell, c)$-sliding block code.

Suppose $\psi$ satisfies the surjectivity condition and let $y\in F^\infty$.
For each $k\geq \ell$ choose $\alpha^{(k)}\in E^{1 + k + c}$ such that $\psi(\alpha^{(k)}) = y_{[0, k]}$.
Pick $x^{(k)}\in Z_E(\alpha^{(k)})$.
Since $E^\infty$ is compact the sequence ${(x^{(k)})}_k$ has a convergent subsequence;
let $x\in E^\infty$ be its limit. 
Then $h(x) = y$ and $h$ is surjective.

Next, assume $\psi$ satisfies the injectivity condition and suppose $h(x) = h(x')$ for some $x, x'\in E^\infty$.
Let $k\geq \ell$ and choose $K\geq c$ in accordance with the injectivity condition.
Then 
\[
    \psi(x_{[0, k + K]})  = {h(x)}_{[0, k + K - c]} = {h(x')}_{[0, k + K - c]}
    = \psi(x'_{[0, k + K]}),
\]
from which it follows that $x_{[0, k]} = x'_{[0, k]}$.
As this is the case for every $k\geq \ell$, we see that $x = x'$ and that $h$ is injective.

For the other direction, let $h\colon E^\infty\LRA F^\infty$ be an $(\ell, c)$-sliding block code.
 Define $\psi\colon E^{1 + \ell + c}\LRA F^{1 + \ell}$ by
\[
  \psi(x_{[0, \ell + c]}) = {h(x)}_{[0, \ell]},
\]
for $x\in E^{\infty}$.
This is well-defined by the choice of $c$ and $\psi$ is compatible with $E$ and $F$.
Hence $\psi$ is an $(\ell, c)$-block map.
Furthermore, the $(\ell, c)$-sliding block code $h_\psi$ induced by $\psi$ coincides with $h$.

Fix $\beta\in F^{1 + k}$ for some $k \geq \ell$ and let $\beta y\in Z_F(\beta)$.
If $h$ is surjective, we may choose $x\in E^\infty$ such that $h(x) = \beta y$.
Note that $\psi(x_{[0, k + c]}) = {h(x)}_{[0, k]} = \beta$.
Hence $\psi$ satisfies the surjectivity condition.

If $h$ is injective, then it is a homeomorphism onto its image;
let $\phi\colon h(E^\infty) \LRA E^\infty$ be the inverse homeomorphism.
Fix $k\geq \ell$.
Since $\phi$ is continuous there is $c_\phi\in \N$ such that $\phi({h(x)}_{[0, k + c_\phi]}) = \phi({h(x')}_{[0, k + c_\phi]})$ implies $x_{[0, k]} = x'_{[0, k]}$ for $x, x'\in E^\infty$.
If $\psi(x_{[0, k + c + c_\phi]}) = \psi(x'_{[0, k + c + c_\phi]})$, for some $x, x'\in E^\infty$,
then
\[
    {h(x)}_{[0, k + c_\phi]} = {h(x')}_{[0, k + c_\phi]}
\]
by the choice of $c$ and      
\[
    x_{[0, k]} = {(\phi h(x))}_{[0, k]} = {(\phi h(x'))}_{[0, k]} = x'_{[0, k]}
\]
by the choice of $c_\phi$.
Hence $\psi$ satisfies the injectivity condition.

Finally, it is straightforward to verify that if $\psi\colon E^{1 + \ell + c}\LRA F^{1 + \ell}$ is an $(\ell, c)$-block map
and $h$ is the $(\ell, c)$-sliding block code induced from $\psi$, then the $(\ell, c)$-block map $\psi_h$ induced from $h$ coincides with $\psi$.
\end{proof}

We record the following immediate corollary.

\begin{corollary}\label{cor:1:1}
    Let $\ell, c\in \N$.
    There is a one-to-one correspondence between the collection of $(\ell, c)$-block maps $\psi\colon E^{1 + \ell + c}\LRA F^{1 + \ell}$ 
    satisfying the \emph{bijectivity condition} and the collection of $(\ell, c)$-conjugacies $h\colon E^\infty\LRA F^\infty$.
    For $c = 0$, there is a one-to-one correspondence between the collection of \emph{bijective} block maps $\psi\colon E^{1 + \ell}\LRA F^{1 + \ell}$ 
    and the collection of $(\ell,0)$-conjugacies $h\colon E^\infty \LRA F^\infty$ whose inverses are also $(\ell, 0)$-conjugacies.
\end{corollary}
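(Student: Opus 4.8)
The statement is an immediate consequence of Theorem~\ref{thm:1:1}, so the plan is to read off both assertions from the correspondence $\psi \leftrightarrow h = h_\psi$ established there. For the first assertion, recall that the \emph{bijectivity condition} means exactly that $\psi$ satisfies both the injectivity and the surjectivity conditions. By the last sentence of Theorem~\ref{thm:1:1}, the injectivity condition holds if and only if $h$ is injective and the surjectivity condition holds if and only if $h$ is surjective; hence $\psi$ satisfies the bijectivity condition if and only if $h$ is a bijection. Since an $(\ell,c)$-conjugacy is by definition a bijective $(\ell,c)$-sliding block code, the bijection $\psi \leftrightarrow h$ of Theorem~\ref{thm:1:1} restricts to the desired one-to-one correspondence.

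For the case $c = 0$ I would argue that the two displayed collections sit inside the correspondence of the first part and then pin down the difference between \emph{satisfying the bijectivity condition} and being a \emph{bijective} block map. In the forward direction, let $\psi\colon E^{1+\ell}\LRA F^{1+\ell}$ be a bijective block map. As noted in the preceding remark, for $c = 0$ the surjectivity condition is just surjectivity of $\psi$, while injectivity of $\psi$ implies the (a priori weaker) injectivity condition; thus $\psi$ satisfies the bijectivity condition and the first part produces an $(\ell,0)$-conjugacy $h = h_\psi$. It remains to see that $h^{-1}$ is again an $(\ell,0)$-conjugacy, and here I would invoke the remark: since $\psi$ is bijective, $\psi^{-1}$ is itself an $(\ell,0)$-block map and $h^{-1} = h_{\psi^{-1}}$ has vanishing continuity constant, so $h^{-1}$ is an $(\ell,0)$-conjugacy.

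For the converse, suppose $h\colon E^\infty \LRA F^\infty$ is an $(\ell,0)$-conjugacy whose inverse $h^{-1}$ is also an $(\ell,0)$-conjugacy, and let $\psi = \psi_h$ and $\psi' = \psi_{h^{-1}}$ be the associated $(\ell,0)$-block maps from Theorem~\ref{thm:1:1}, so that $\psi(x_{[0,\ell]}) = {h(x)}_{[0,\ell]}$ and $\psi'(y_{[0,\ell]}) = {h^{-1}(y)}_{[0,\ell]}$. Since $h$ and $h^{-1}$ are surjective and $c = 0$, both $\psi$ and $\psi'$ are surjective. To conclude that $\psi$ is bijective I would verify the block-level identities $\psi' \circ \psi = \id$ and $\psi \circ \psi' = \id$: for $x\in E^\infty$, setting $y = h(x)$ gives $\psi'(\psi(x_{[0,\ell]})) = \psi'(y_{[0,\ell]}) = {h^{-1}(y)}_{[0,\ell]} = x_{[0,\ell]}$, and symmetrically for $\psi\circ\psi'$. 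Hence $\psi$ is a bijective block map with inverse $\psi'$, which completes the correspondence.

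The routine part is the first assertion, which is pure bookkeeping on top of Theorem~\ref{thm:1:1}. The step I expect to require the most care is the converse in the $c = 0$ case, namely upgrading ``$\psi$ satisfies the bijectivity condition'' to ``$\psi$ is an honest bijection of finite path sets''; this is exactly where the hypothesis that the \emph{inverse} is also an $(\ell,0)$-conjugacy is used, through the identification $\psi_{h^{-1}} = \psi_h^{-1}$ at the level of blocks, and where the equality $\psi(x_{[0,\ell]}) = {h(x)}_{[0,\ell]}$ (valid precisely because $c = 0$) makes the computation go through.
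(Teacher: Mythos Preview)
Your proposal is correct and matches the paper's intended approach: the paper gives no proof at all, simply recording the statement as an ``immediate corollary'' of Theorem~\ref{thm:1:1} and the preceding remark, and you have spelled out precisely the details that this entails. In particular, your treatment of the $c=0$ converse---showing $\psi_{h^{-1}}\circ\psi_h = \id$ at the block level using the identity $\psi(x_{[0,\ell]}) = h(x)_{[0,\ell]}$ valid for vanishing continuity constant---is the right way to extract genuine bijectivity of $\psi$ from the hypothesis on $h^{-1}$, and is more explicit than anything the paper writes down.
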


\begin{remark}
    At the level of block maps, the index $(\ell, c)$ can be interpreted as memory and anticipation, respectively. 
    For one-sided conjugacies, \emph{memory is not allowed} (cf.~\cite[Section 13.8]{LM}), however, we have seen that memory of the block map
    exactly corresponds to eventual conjugacies.
    At the level of path spaces, the $\ell$ indicates the lag of the eventual conjugacy while $c$ indicates the continuity complexity of the associated homeomorphism.
\end{remark}

\section{Moves on graphs}
\label{moves}

In this section, we relate eventual conjugacies with moves on the graph.
We first recall the definition of the out-split and in-split of~\cite{Williams1973} modified for graphs (cf.~\cite{LM, Bates-Pask2004}).
For simplicity, we only define these moves for finite graphs with no sinks.

\begin{definition}[Move \texttt{(O)}]
   Let $G = (G^0, G^1, r_G, s_G)$ be a finite graph with no sinks.  
   Let $v\in G^0$ be a vertex, let $n\in \N_+$ and partition $s_G^{-1}(v)$ into finitely many nonempty sets
   \begin{align} \label{eq:out-partition}
       s_G^{-1}(v) = \E_v^1 \amalg \cdots \amalg\E_v^n.
   \end{align}
   The \emph{out-split graph} $E$ of $G$ at $v$ with respect to the partition~\eqref{eq:out-partition} is given by
   \begin{align*}
       E^0 &= \{ v_1,\ldots,v_n \} \cup \{w_1 \mid w\in G^0\setminus \{v\}\}, \\
       E^1 &= \{ e_1,\ldots,e_n \mid r_G(e) = v\} \cup \{ f_1 \mid r_G(f) \neq v\},
   \end{align*}
   with source and range maps $r_E, s_E\colon E^1\LRA E^0$ given by
    \begin{align*}
        r_E(e_i) &= 
        \begin{cases}
            r_G(e)_1    & \textrm{if } s_G(e)\neq v, \\
            v_i    & \textrm{if } s_G(e) = v
        \end{cases}\\
        s_E(e_i) &=
        \begin{cases}
            s_G(e)_1    & \textrm{if}~s_G(e)\neq v, \\
            v_j         & \textrm{if}~s_G(e) = v, e\in \E_v^j,
        \end{cases}
    \end{align*}
    for $e_i\in E^1$.
\end{definition}

A directed graph is conjugate to its out-split graph, see~\cite[Corollary 6.2]{BCW2017}.

\begin{example}\label{ex:golden-mean}
  Let us consider an example of the out-splitting process.
  More examples can be found in~\cite[Section 2.4]{LM}.
 The graph
 \begin{figure}[H]
    \begin{center}
    \begin{tikzpicture}
    [scale=5, ->-/.style={thick, decoration={markings, mark=at position 0.6 with {\arrow{Straight Barb[line width=0pt 1.5]}}},postaction={decorate}},
        node distance =2cm,
    thick,
    vertex/.style={inner sep=0pt, circle, fill=black}]
        \node(G) {};
        \node[vertex, right of = G,label=left:$v$] (G1) {.};
        \node[vertex, right of = G1] (G2) {.};

        \draw[->-, looseness=30, out=135, in=45] (G1) to node[above] {$e$} (G1);
        \draw[->-, bend left] (G1) to node[above] {$f$} (G2);
        \draw[->-, bend left] (G2) to node[below] {$g$} (G1);
   \end{tikzpicture}
    \end{center}
\end{figure}
\noindent encodes (a conjugate copy) of the golden mean shift, cf. e.g.~\cite[Example 1.2.3]{LM}.
The vertex $v$ emits two edges, so we can out-split it with respect to the partition 
\[
  s^{-1}(v) = \{e\} \amalg \{f\}.
\]
The resulting graph is
\begin{figure}[H]
    \begin{center}
    \begin{tikzpicture}
    [scale=5, ->-/.style={thick, decoration={markings, mark=at position 0.6 with {\arrow{Straight Barb[line width=0pt 1.5]}}},postaction={decorate}},
        node distance =2cm,
    thick,
    vertex/.style={inner sep=0pt, circle, fill=black}]
        \node(E) {};
        \node[vertex, right of = E, label=left:$v_1$] (E1) {.};
        \node[vertex, right of = E1] (E2) {.};
        \node[vertex, below of = E1, label=left:$v_2$] (E3) {.};

        \draw[->-, looseness=30, out=135, in=45] (E1) to node[above] {$e_1$} (E1);
        \draw[->-] (E1) to node[left] {$e_2$} (E3);
        \draw[->-, bend right] (E3) to node[below] {$f_1$} (E2);
        \draw[->-] (E2) to node[above] {$g_1$} (E1);
        \draw[->-] (E2) to node[above] {$g_2$} (E3);
   \end{tikzpicture}
    \end{center}
\end{figure}
\noindent and this is again a conjugate copy of the golden mean shift.
\end{example}

Next, we introduce a slight modification of the classical in-split graph, cf.~\cite{Eilers-Ruiz}.

\begin{definition}[Move \texttt{(I-)}]\label{def:(I-)}
    Let $G = (G^0, G^1, r_G, s_G)$ be a finite graph with no sinks.
    Let $v\in G^0$ be a vertex and partition $r_G^{-1}(v)$ into $n\in \N_+$ possibly empty sets
    \begin{align}\label{eq:in-partition}
        \E^v: r^{-1}(v) = \E^v_1 \amalg\cdots \amalg \E^v_n.
    \end{align}
    The \emph{in-split graph} $E$ of $G$ at $v$ with respect to the partition $\E$ is given by
    \begin{align*}
        E^0 &= \{ v^1,\ldots,v^n \} \cup \{w^1 \mid w\in G^0\setminus \{v\}\}, \\
        E^1 &= \{ e^1,\ldots, e^n \mid s_G(e) = v \} \cup \{f^1 \mid s_G(f) \neq v \}   
    \end{align*}
    with source and range maps $s_E, r_E\colon E^1 \LRA E^0$ given by
    \begin{align*}
        s_E(e^i) &= 
        \begin{cases}
            s_G(e)^1    & \textrm{if}~s_G(e)\neq v, \\
            v^i         & \textrm{if}~s_G(e) = v,
        \end{cases}\\
        r_E(e^i) &= 
        \begin{cases}
            r_G(e)^1   & \textrm{if}~r_G(e) \neq v, \\
            v^j        & \textrm{if}~r_G(e) = v, e\in \E^v_j,
        \end{cases}
    \end{align*}
    for $e^i\in E^1$.
\end{definition}

\begin{remark}
  The above definition is a modification of~\cite[Section 5]{Bates-Pask2004} in that we allow the partition sets to be empty,
  this is called Move \texttt{(I-)} in~\cite[Definition 3.3]{Eilers-Ruiz}.
    Note that this in-split introduces new sources, one for each empty partition set.
    The out-split introduces no new sources.
\end{remark}

\begin{example}
  Consider again the graph of Example~\ref{ex:golden-mean} which encodes the golden mean shift.
  The left-most vertex $v$ has two in-going edges, and we can perform an in-split with respect to the partitions $r^{-1}(v) =\{e\} \amalg \{g\}$ and $r^{-1}(v) = \{e,g\} \amalg \emptyset$.
  This produces the two graphs 
\begin{figure}[H]
    \begin{center}
    \begin{tikzpicture}
    [scale=5, ->-/.style={thick, decoration={markings, mark=at position 0.6 with {\arrow{Straight Barb[line width=0pt 1.5]}}},postaction={decorate}},
        node distance =2cm,
    thick,
    vertex/.style={inner sep=0pt, circle, fill=black}]
        \node(E) {};
        \node[vertex, right of = E] (E1) {.};
        \node[vertex, right of = E1] (E2) {.};
        \node[vertex, below of = E1] (E3) {.};

        \draw[->-, looseness=30, out=135, in=45] (E1) to node[above] {$e^1$} (E1);
        \draw[->-] (E3) to node[left] {$e^2$} (E1);
        \draw[->-, bend left] (E2) to node[below] {$g^1$} (E3);
        \draw[->-] (E1) to node[above] {$f^1$} (E2);
        \draw[->-] (E3) to node[above] {$f^2$} (E2);
        
        \node[right of = E2] (F) {};
        \node[vertex, right of = F] (F1) {.};
        \node[vertex, right of = F1] (F2) {.};
        \node[vertex, below of = F1] (F3) {.};

        \draw[->-, looseness=30, out=135, in=45] (F1) to node[above] {$e^1$} (F1);
        \draw[->-] (F3) to node[left] {$e^2$} (F1);
        \draw[->-, bend right] (F2) to node[above] {$g^1$} (F1);
        \draw[->-] (F1) to node[below] {$f^1$} (F2);
        \draw[->-] (F3) to node[right, below] {$f^2$} (F2);
   \end{tikzpicture}
    \end{center}
\end{figure}
\noindent respectively.
Note that the second in-split which used an empty partition set produced a source in the right-most graph.
\end{example}

The next definition is from~\cite[Definition 3.6]{Eilers-Ruiz}.

\begin{definition}[Move \texttt{(I+)}]
    Let $G$ be a finite graph with no sinks.
    An \emph{elementary balanced in-split} of $G$ is a pair of in-split graphs $E$ and $F$ of $G$ at the same vertex using the same number of partition sets.
\end{definition}

Suppose $E$ and $F$ are elementary balanced in-split graphs of $G$ at $v\in G^0$.
We shall see in Proposition~\ref{prop:I+,eventual-conjugacy} that $E$ and $F$ are eventually conjugate.
The labelings on the vertices and edges (as in Definition~\ref{def:(I-)}) define canonical bijections $\phi\colon E^0 \LRA F^0$ and $\psi^{(0)}\colon E^1 \LRA F^1$ 
given by $\phi(v_E^i) = v_F^i$ for $v_E^i\in E^0$ and $\psi^{(0)}(e^i_E) = e^i_F$ for $e_E^i\in E^1$, respectively.
In general, $\psi^{(0)}$ is not a block map since it is not compatible with $E$ and $F$.
\emph{We shall identify the vertices and edges of $E$ and $F$ via these bijections.}

Given $v^i_E\in E^0$ we can perform an in-split of $E$ at $v^i_E$ using $n$ partition elements to obtain a graph $E_{(2)}$.
Similarily, we obtain a graph $F_{(2)}$ as an in-split of $F$ at $v^i_F\in F^0$ using $n$ partition elements.
Again there is a canonical identification of the vertices and edges in the two graphs.
Proposition~\ref{prop:I+,eventual-conjugacy} below shows that $E_{(2)}$ and $F_{(2)}$ are eventually conjugate
and we say that they are a \emph{2-step balanced in-split of $G$}.
Iterating this process, we define an $\ell$-step balanced in-split recursively.

\begin{definition}[Iterated \texttt{(I+)}]
    Let $G$ be a finite graph with no sinks and let $\ell\in \N$.
    For $\ell\geq 2$, two graphs $E_{(\ell)}$ and $F_{(\ell)}$ are \emph{$\ell$-step balanced in-split graphs of $G$} if the following holds:
    \begin{itemize}
        \item There are graphs $E_{(\ell - 1)}$ and $F_{(\ell - 1)}$ which are $(\ell - 1)$-step balanced in-splits of $G$;
        \item $E_{(\ell)}$ is the in-split graph of $E_{(\ell - 1)}$ at a vertex $v^i_{E_{(\ell - 1)}}\in E_{(\ell - 1)}^0$ using $m$ partition elements; and
        \item $F_{(\ell)}$ is the in-split graph of $F_{(\ell - 1)}$ at the identified vertex $v^i_{F_{(\ell - 1)}}\in F_{(\ell - 1)}^0$ using $m$ partition elements.
    \end{itemize}
    For $\ell = 0$, we require that $E_{(0)} = G = F_{(0)}$ and 
    for $\ell = 1$ the graphs $E_{(1)}$ and $F_{(1)}$ should be elementary balanced in-splits of $G$.
\end{definition}

For $\ell = 2$, we depict this as
\begin{figure}[H]
    \begin{center}
    \begin{tikzpicture}
    [scale=5, 
    node distance =1cm,
    thick,
    vertex/.style={inner sep=0pt, circle, fill=black}]
        \node (G) {$G$};
        \node[below of = G] (N) {};
        \node[left of = N] (E1) {$E_{(1)}$};
        \node[right of = N] (F1) {$F_{(1)}$};
        \node[below of = E1] (NE1) {};
        \node[below of = F1] (NF1) {};
        \node[left of = NE1] (E2) {$E_{(2)}$};
        \node[right of = NF1] (F2) {$F_{(2)}$};
        
        \draw (G) to (E1);
        \draw (G) to (F1);
        \draw (E1) to (E2);
        \draw (F1) to (F2);
    \end{tikzpicture}
    \end{center}
\end{figure}

\begin{proposition}\label{prop:I+,eventual-conjugacy}
    Let $G$ be a finite graph with no sinks and let $\ell\in \N$.
    If $E_{(\ell)}$ and $F_{(\ell)}$ are $\ell$-step balanced in-split graphs of $G$, then $E_{(\ell)}$ and $F_{(\ell)}$ are $\ell$-conjugate.
    In particular, if $E$ and $F$ are elementary balanced in-split graphs of $G$, then $E$ and $F$ are $1$-conjugate.
\end{proposition}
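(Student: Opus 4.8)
The plan is to realise each $\ell$-conjugacy as a bijective block map and invoke the $c=0$ clause of Corollary~\ref{cor:1:1}: a bijective block map $\psi\colon E_{(\ell)}^{1+\ell}\LRA F_{(\ell)}^{1+\ell}$ induces an $(\ell,0)$-conjugacy whose inverse is again an $(\ell,0)$-conjugacy, which is precisely the statement that $E_{(\ell)}$ and $F_{(\ell)}$ are $\ell$-conjugate. The case $\ell=0$ is the identity, so the task is to build these block maps for $\ell\geq 1$.

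First I would record the feature of a single in-split that drives the argument. If $E$ is the in-split of $G$ at $v$ with partition $r^{-1}(v)=\E^v_1\amalg\cdots\amalg\E^v_n$ and we write an edge of $E$ as $e^i$ with underlying $G$-edge $e$, then the source and range formulas show that along any path $x_0x_1\cdots\in E^\infty$ the superscript of $x_{t+1}$ is determined by the underlying edge of $x_t$ alone: it is $j$ when $r_G(x_t)=v$ and $x_t\in\E^v_j$, and $1$ otherwise, regardless of the superscript of $x_t$. Hence an infinite path of $E$ is pinned down by its underlying $G$-path together with the single initial superscript, and every forward superscript is read off the immediate past through the partition. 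This is the sense in which an in-split has ``memory one''.

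For the elementary case $\ell=1$ the graphs $E=E_{(1)}$ and $F=F_{(1)}$ are in-splits of $G$ at the same $v$ with partitions $(\E^v_j)$ and $(\F^v_k)$ into the same number $n$ of sets, and I use the identification of their vertices, edges and sources fixed just before the statement. I then define $\psi\colon E^2\LRA F^2$ to keep the first edge with its superscript and to recompute the second superscript through $\F$ rather than $\E$, that is $\psi(e_0^{\epsilon_0}e_1^{\epsilon_1})=e_0^{\epsilon_0}e_1^{\delta_1}$ with $\delta_1=k$ if $r_G(e_0)=v$ and $e_0\in\F^v_k$, and $\delta_1=1$ otherwise. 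Using the memory-one feature one checks that $\psi$ is compatible with $E$ and $F$, and that the analogous recipe from $F$ to $E$ is a two-sided inverse; here the hypothesis of equally many partition sets is exactly what makes the two superscript ranges coincide, so that $\psi$ is a genuine bijection. A direct computation of $\sigma_F^2(h_\psi(x))$ and $\sigma_F(h_\psi(\sigma_E(x)))$ confirms that the induced map is a $1$-sliding block code, and Corollary~\ref{cor:1:1} then delivers the $1$-conjugacy.

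For $\ell\geq 2$ I would induct along the recursive definition, carrying as an invariant that $E_{(\ell-1)}$ and $F_{(\ell-1)}$ remain identified as in the elementary case --- equal vertices, edges and sources, with range maps differing only in the superscript assigned to edges entering the successively split vertices, drawn from ranges of equal size --- and that the $(\ell-1)$-conjugacy between them is the superscript-translating bijective block map $E_{(\ell-1)}^{\ell}\LRA F_{(\ell-1)}^{\ell}$. In-splitting both graphs at the identified vertex $v^i$ with the same number $m$ of partition sets appends one new superscript layer to each; by the memory-one feature this layer is past-determined at every position except the first, where it is free of size $m$ on both sides. Translating this layer as well and reading the newly added, past-determined superscript off a window of $1+\ell$ edges yields a bijective block map $E_{(\ell)}^{1+\ell}\LRA F_{(\ell)}^{1+\ell}$, to which Corollary~\ref{cor:1:1} applies. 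The main obstacle is precisely this bookkeeping across the $\ell$ synchronised in-splits: since the two partitions chosen at each level are a priori unrelated and the graphs carry different range maps, I must verify that the identification and the controlled range discrepancy persist, and that the initial-superscript freedoms produced by the independent partitions match up, so that the translating block map remains bijective while the lag grows by exactly one at each step.
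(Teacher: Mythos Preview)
Your proposal is correct and follows essentially the same strategy as the paper: build a bijective $(\ell,0)$-block map $E_{(\ell)}^{1+\ell}\LRA F_{(\ell)}^{1+\ell}$ by induction on $\ell$, then invoke Corollary~\ref{cor:1:1}. Your explicit map $\psi(e_0^{\epsilon_0}e_1^{\epsilon_1})=e_0^{\epsilon_0}e_1^{\delta_1}$ in the elementary case is exactly the paper's $\psi^{(1)}$.

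The packaging differs slightly. The paper organises the induction through the forgetful surjections $q_{E_{(\ell)}}\colon E_{(\ell)}^{\ell+1}\LRA E_{(\ell-1)}^{\ell+1}$ that strip the outermost superscript, and characterises $\psi^{(\ell)}$ by the commuting square $\bar\psi^{(\ell-1)}\circ q_{E_{(\ell)}}=q_{F_{(\ell)}}\circ\psi^{(\ell)}$; the bijection $\theta$ preserving the first index is then forced by this diagram. You instead carry the invariant that $E_{(\ell-1)}$ and $F_{(\ell-1)}$ share vertex set, edge set and source map, with only the range maps differing, and describe the block map directly as superscript translation. The paper's diagrammatic framing makes the inductive step and the compatibility check somewhat more automatic, while your ``memory-one'' observation (that every superscript after the first is determined by the underlying edge immediately before it) makes the bijectivity more transparent. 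Either route handles the bookkeeping you flag as the main obstacle; the key fact in both is that sources agree at every stage, so that in-splitting at the identified vertex produces identical edge sets on both sides.
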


\begin{proof}
  \emph{Step 1.}
    Suppose $E$ and $F$ are in-split graphs of $G$ at $v\in G^0$ using $n$ sets in the partitions
    \[
        \E^v: r_G^{-1}(v) = \E^v_1 \amalg \cdots \amalg \E^v_n, \qquad
        \F^v: r_G^{-1}(v) = \F^v_1 \amalg \cdots \amalg \F^v_n.
    \]
    There is a canonical surjection $q_{E}\colon E^2 \LRA G^2$ given by
    \[
        q_E(e^i f^j) = e f,
    \]
    for $e^i f^j\in E^2$.
    The map simply forgets the indeces of the edges.
    Similarly, there is a canonical surjection $q_F\colon F^2\LRA G^2$.
    We will construct a map $\psi^{(1)}\colon E^2\LRA F^2$ satisfying $q_E = q_F\circ \psi^{(1)}$.

    If $e f\in G^2$ and $s(e) = v$ then there is a unique bijection $\theta\colon q_E^{-1}(e f) \LRA q_F^{-1}(e f)$ which preserves the label of the first edge, that is,
    \[
        \theta(e^i f^j) = e^i f^{j'}
    \]
    for $e^i f^j\in q_E^{-1}(e f)$.
    If instead $s(e) \neq v$, then the preimage of $e f$ under $q_E$ is a singleton.
    We may therefore define $\psi^{(1)}\colon E^2 \LRA F^2$ as
    \[
        \psi^{(1)}(e^i f^j) = 
        \begin{cases}
            q_F^{-1}(e f) & \textrm{if}~s(e) \neq v, \\
            \theta(e^i f^j) & \textrm{if}~s(e) = v,
        \end{cases}
    \]
    for $e^i f^j \in E^2$.
    This is bijective and compatible and satisfies $q_E = q_F\circ \psi^{(1)}$.
    It therefore induces a $1$-conjugacy between $E^\infty$ and $F^\infty$.

    \emph{Step 2}.
    Set $E_{(1)} = E$ and $F_{(1)} = F$ and suppose $E_{(2)}$ is an in-split graph of $E_{(1)}$ at $w_E\in E_{(1)}^0$ 
    while $F_{(2)}$ is an in-split graph of $F_{(1)}$ at an identified vertex $w_F\in F_{(1)}^0$ using $m$ sets in the partitions
    \[
        \E^w: r_E^{-1}(w) = \E^{w}_1 \amalg \dots \amalg \E^{w}_m, \qquad
        \F^w: r_F^{-1}(w) = \F^{w}_1 \amalg \cdots \amalg \F^{w}_m.
    \]
    We will use the block map $\psi^{(1)}\colon E_{(1)}^2 \LRA F_{(1)}^2$ constructed in \emph{Step 1} to define a $2$-block map $\psi^{(2)}\colon E_{(2)}^3 \LRA F_{(2)}^3$.
    As before there is a canonical surjection $q_{E_{(2)}}\colon E_{(2)}^3 \LRA E_{(1)}^3$ given by
    \[
      q_{E_{(2)}} (e^i f^j g^k) = e f g,
    \]
    for $e^i f^j g^k\in E_{(2)}$.
    The map simply forgets the indeces of the edges.
    Similarily, there is a canonical surjection $q_{F_{(2)}}\colon F_{(2)}^3\LRA F_{(1)}^3$.
    Since $\psi^{(1)}$ is compatible, we can extend it to a $2$-block map $\bar{\psi}^{(1)}\colon E_{(1)}^3 \LRA F_{(1)}^3$.
    We will define a map $\psi^{(2)}\colon E_{(2)}^3\LRA F_{(2)}^3$ such that $\bar{\psi}^{(1)}\circ q_{E_{(2)}} = q_{F_{(2)}}\circ \psi^{(2)}$.

    If $e^i f^j g^k\in E_{(2)}^3$ and $s(e) = w_E$, then there is a unique bijection $\theta\colon q_{E_{(2)}}^{-1}(e f g) \LRA q_{F_{(2)}}^{-1}(\bar{\psi}^{(1)}(e f g))$ 
    which preserves the label of the first index, that is,
    \[
        \theta(e^i f^j g^k) = e^i f^{j'} g^{k'},
    \]
    for $e^i f^j g^k\in E_{(2)}^3$.
    If instead $s(e) \neq w_E$, then $s(\bar{\psi}^{(1)})(e f g) \neq w_F$, and the preimage of $e f g\in E_{(1)}$ under $q_{E_{(2)}}$ is a singleton,
    and the preimage of $\bar{\psi}^{(1)}(e f g)$ under $q_{F_{(2)}}$ is a singleton.
    We may therefore define $\psi^{(2)}\colon E_{(2)}^3\LRA F_{(2)}^3$ by
    \[
        \psi^{(2)}(e^i f^j g^k) =
        \begin{cases}
            q_{F_{(2)}}^{-1}(\bar{\psi}^{(1)}(e f g)) & \textrm{if}~s(e) \neq w_E, \\
            \theta(e^i f^j g^k) & \textrm{if}~s(e) = w_E,
        \end{cases}
    \]
    for $e^i f^j g^k\in E_{(2)}^3$.
    Then $\psi^{(2)}$ is bijective and compatible and satisfies $\bar{\psi}^{(1)}\circ q_{E_{(2)}} = q_{F_{(2)}}\circ \psi^{(2)}$.
    It induces a $2$-conjugacy between $E_{(2)}^\infty$ and $F_{(2)}^\infty$.

    \emph{Step $\ell$}:
    Assume now that $E_{(\ell)}$ and $F_{(\ell)}$ are $\ell$-step balanced in-split graphs of $G$.
    In particular, there is an $(\ell - 1)$-block map $\psi^{(\ell - 1)}\colon E_{(\ell - 1)}^\ell \LRA F_{(\ell - 1)}^\ell$.
    This may be extended to a block map $\bar{\psi}^{(\ell - 1)}\colon E_{(\ell - 1)}^{\ell + 1}\LRA F_{(\ell - 1)}^{\ell + 1}$,
    and there are canonical surjections $q_{E_{(\ell)}}\colon E_{(\ell)}^{\ell + 1} \LRA E_{(\ell - 1)}^{\ell + 1}$ 
    and $q_{F_{(\ell)}}\colon F_{(\ell)}^{\ell + 1}\LRA F_{(\ell - 1)}^{\ell + 1}$ as in Step 2.
    Using the same strategy as above, we can define a block map $\psi^{(\ell)}\colon E_{(\ell)}^{\ell + 1}\LRA F_{(\ell)}^{\ell + 1}$
    which is bijective and compatible and satisfies $\bar{\psi}^{(\ell - 1)}\circ q_{E_{(\ell)}} = q_{F_{(\ell)}}\circ \psi^{(\ell)}$.
    It therefore induces an $\ell$-conjugacy between $E_{(\ell)}^\infty$ and $F_{(\ell)}^\infty$.
\end{proof}

\begin{example}
    It is well-known that one-sided conjugacy is strictly stronger that eventual conjugacy, cf. e.g.~\cite[Example 3.6]{BC}.
    This example shows that the two relations can be distinguished among graphs with finite path spaces.
    Consider the graphs
\begin{figure}[H]
    \begin{center}
    \begin{tikzpicture}
    [scale=5, ->-/.style={thick, decoration={markings, mark=at position 0.6 with {\arrow{Straight Barb[line width=0pt 1.5]}}},postaction={decorate}},
    node distance =2cm,
    thick,
    vertex/.style={inner sep=0pt, circle, fill=black}]
        \node (E) {$E:$};
        \node[vertex, right of = E] (E1) {.};
        \node[vertex, right of = E1, label=below:{$a$}] (E2) {.};
        \node[vertex, right of = E2] (E3)  {.};

        \draw[->-] (E1) to (E2);
        \draw[->-] (E3) to (E2);
        \draw[->-, looseness=30, out=135, in=45] (E2) to (E2);
        
        \node (F) [right of = E3] {$F:$};
        \node[vertex, right of = F] (F1) {.};
        \node[vertex, right of = F1] (F2) {.};
        \node[vertex, right of = F2, label=below:{$\alpha$}] (F3)  {.};

        \draw[->-] (F1) to (F2);
        \draw[->-] (F2) to (F3);
        \draw[->-, looseness=30, out=135, in=45] (F3) to (F3);
    \end{tikzpicture}
    \end{center}
\end{figure}
\noindent The adjacency matrices of the graphs have different total column amalgamations, so they are not conjugate.
However, we can construct $E$ and $F$ as a balanced in-split of the graph  
\begin{figure}[H]
    \begin{center}
    \begin{tikzpicture}
    [scale=5, ->-/.style={thick, decoration={markings, mark=at position 0.6 with {\arrow{Straight Barb[line width=0pt 1.5]}}},postaction={decorate}},
    node distance =2cm,
    thick,
    vertex/.style={inner sep=0pt, circle, fill=black}]
        \node[vertex] (G1) {.};
        \node[vertex, right of = G1, label=right:{$v$}] (G2) {.};

        \draw[->-] (G1) to (G2);
        \draw[->-, looseness=30, out=135, in=45] (G2) to (G2);
    \end{tikzpicture}
    \end{center}
\end{figure}
\noindent at the vertex $v$.
Hence $E$ and $F$ are $1$-conjugate by Proposition~\ref{prop:I+,eventual-conjugacy}.
In fact, any bijection of the path spaces which maps the loop in $E$ to the loop in $F$ is an explicit $1$-conjugacy.
\end{example}

Next, we prove a converse to Proposition~\ref{prop:I+,eventual-conjugacy}.
The proof uses ideas from~\cite[Section 2]{Boyle-Franks-Kitchens} (see also~\cite[Chapter 2]{Kitchens}).

\begin{theorem}\label{thm:ec-iterated-balanced}
    Let $E$ and $F$ be finite directed graphs with no sinks.
    Then $E$ and $F$ are eventually conjugate if and only if 
    there exists a graph $G$ such that $E$ and $F$ are (conjugate to) the iterated balanced in-split graphs of $G$.
\end{theorem}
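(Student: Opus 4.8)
The plan is to prove the two directions separately. The backward direction is immediate from the previous results: if $E$ and $F$ are conjugate to iterated balanced in-split graphs $E_{(\ell)}$ and $F_{(\ell)}$ of some graph $G$, then Proposition~\ref{prop:I+,eventual-conjugacy} furnishes an $\ell$-conjugacy $h\colon E_{(\ell)}^\infty \LRA F_{(\ell)}^\infty$. Composing with the given conjugacies (which are $0$-conjugacies) yields an $\ell$-conjugacy $E^\infty \LRA F^\infty$. The only point to check is that the inverse is also an $\ell$-conjugacy; but the block map $\psi^{(\ell)}$ constructed in the proof of Proposition~\ref{prop:I+,eventual-conjugacy} is \emph{bijective} and compatible, so by the $c = 0$ case of Corollary~\ref{cor:1:1} its inverse is likewise an $(\ell, 0)$-conjugacy. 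Hence $E$ and $F$ are eventually conjugate.

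The forward direction is the substance of the theorem. Suppose $h\colon E^\infty \LRA F^\infty$ is an eventual conjugacy, so after increasing $\ell$ we may assume both $h$ and $h^{-1}$ are $\ell$-conjugacies. Using the lemma reducing continuity complexity, I would pass to a higher block presentation $\bar E = E^{[c]}$ so that $h$ becomes an $(\ell, 0)$-conjugacy, at the cost of replacing $E$ by a conjugate graph; since higher block graphs are conjugate copies, this is harmless for the statement. By Corollary~\ref{cor:1:1}, the $(\ell, 0)$-conjugacy corresponds to a \emph{bijective} block map $\psi\colon \bar E^{1 + \ell} \LRA F^{1 + \ell}$. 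The task is then purely combinatorial: to realize a bijection between the length-$(1+\ell)$ paths of two graphs that intertwines the higher-block shift structures as arising from a common graph $G$ via iterated balanced in-splits.

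The key construction, following the Boyle--Franks--Kitchens circle of ideas, is to build $G$ by amalgamating. The bijective block map $\psi$ identifies $\bar E^{1+\ell}$ with $F^{1+\ell}$ compatibly with concatenation, which means the higher block graphs $\bar E^{[1+\ell]}$ and $F^{[1+\ell]}$ are \emph{equal} under this identification. I would then run the in-split construction in reverse: define $G$ to be a graph whose $\ell$-step balanced in-split (in the sense defined before Proposition~\ref{prop:I+,eventual-conjugacy}) recovers $\bar E$ on one branch and $F$ on the other. Concretely, the edges of $\bar E$ and of $F$ carry indices recording the in-split history; forgetting these indices via maps analogous to the surjections $q_{E_{(\ell)}}$ appearing in the proof of Proposition~\ref{prop:I+,eventual-conjugacy} collapses both $\bar E$ and $F$ onto the same graph $G$, and the bijectivity of $\psi$ guarantees that the two collapse maps are compatible with a single $G$. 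The recursive structure of the iterated in-split definition is matched by peeling off one coordinate of the block map at a time.

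The main obstacle I expect is organizing this reverse amalgamation so that at each of the $\ell$ stages the two sides genuinely arise as in-splits \emph{at an identified vertex using the same number of partition sets}, i.e.\ verifying that $\psi$ induces balanced in-splits rather than unbalanced ones. The bijectivity of $\psi$ is precisely what should force the partition cardinalities to agree on the two branches: an edge $ef \in G^2$ has preimages under $q_{\bar E}$ and $q_F$ whose sizes are dictated by the in-degrees in the split graphs, and the bijection $\psi$ matches these preimages, so the counts coincide. Making this precise requires an induction on $\ell$ that simultaneously produces the intermediate graphs $E_{(j)}$, $F_{(j)}$ and the intertwining block maps $\psi^{(j)}$ dual to those in Proposition~\ref{prop:I+,eventual-conjugacy}, checking at each step that the identified-vertex and equal-partition-size conditions hold. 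The presence of sources introduced by empty partition sets in Move \texttt{(I-)} must be accommodated, which is why the ambient category is graphs with no sinks but possibly with sources.
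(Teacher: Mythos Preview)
Your backward direction is correct and matches the paper. The forward direction, however, has a genuine gap at the step where you invoke Corollary~\ref{cor:1:1} to obtain a \emph{bijective} block map. That corollary yields a bijection only when the inverse $\bar h^{-1}$ is also an $(\ell,0)$-conjugacy, but the lemma you use to pass to $\bar E = E^{[c]}$ explicitly trades continuity complexity: reducing $h$ to $(\ell,0)$ raises the continuity constant of $\bar h^{-1}=\varphi\circ h^{-1}$ (to roughly $2c$, not $0$). So you only obtain an $(\ell,0)$-block map satisfying the bijectivity \emph{condition}, which is strictly weaker than being a bijection on $(1+\ell)$-paths. Taking higher blocks on the $F$-side as well does not repair the asymmetry in any obvious way.

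Even granting a bijective $\psi$, the next claim---that $\bar E^{[1+\ell]}$ and $F^{[1+\ell]}$ become equal under $\psi$---is false. Compatibility only forces the \emph{last} edges of successive blocks to concatenate in $F$, not the full $\ell$-overlap required for a graph isomorphism of higher block graphs. For instance, with $E=F$ the single-vertex two-loop graph, every bijection of $E^2$ is compatible, yet most do not descend to a map on the vertex set $E^1$ of $E^{[2]}$. Without that identification, the plan to ``forget indices'' has no content: neither $\bar E$ nor $F$ carries canonical in-split labels to erase, so there is no candidate for $G$.

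The paper sidesteps both problems by never reducing to $c=0$. It builds $G$ symmetrically, with vertex set the pairs $\big(x_{[\ell,\ell+2c]},\,y_{[\ell,\ell+2c]}\big)$ for $h(x)=y$, and defines $E_{(j)}$ (respectively $F_{(j)}$) by enlarging the $E$-window (respectively $F$-window) backward. The continuity constants of $h$ \emph{and} $h^{-1}$ together force a bijection between the length-$j$ prefixes that can be prepended on each side, which is precisely the balanced condition. Finally $E_{(\ell)}$ is identified with a higher block graph of $E$ and hence reached by out-splits. The essential idea missing from your outline is that the construction must track windows in both $E$ and $F$ simultaneously, coupled through $h$, rather than collapsing everything to one side first.
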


\begin{proof}
    If $E$ and $F$ are the iterated balanced in-split graphs of $G$, then they are eventually conjugate by Proposition~\ref{prop:I+,eventual-conjugacy}.
    For the converse implication suppose that $E$ and $F$ are eventually conjugate.
    We may assume that there is a homeomorphism $h\colon E^\infty \LRA F^\infty$ which is an $\ell$-conjugacy and 
    whose inverse $h^{-1}\colon F^\infty \LRA E^\infty$ is also an $\ell$-conjugacy
    and that $c\in \N$ is a continuity constant for $h$ and $h^{-1}$.

    We start by constructing the graph $G$ as follows:
    The set of vertices is given by 
    \[
        G^0 = \{ \big(x_{[\ell, \ell + 2c]}, y_{[\ell, \ell + 2c]}\big) \mid x\in E^\infty, y\in F^\infty,  h(x) = y\}
    \]
    with the following transition rule:
    There is an edge from $(x_{[\ell, \ell + c]}, y_{[\ell, \ell + c]})$ to $(z_{[\ell, \ell + c]}, w_{[\ell, \ell + c]})$ 
    if and only if $x_{(\ell, \ell + c]} = z_{[\ell, \ell + c)}$ and $y_{(\ell, \ell + c]} = w_{[\ell, \ell + c)}$.

    For each $j = 0,\ldots,\ell$, define the graph $E_{(j)}$ with the following set of vertices
    \[
        E_{(j)}^0 = \{ \big(x_{[\ell - j, \ell + 2c]}, y_{[\ell, \ell + 2c]}\big) \mid x\in E^\infty, y\in F^\infty,  h(x) = y \}
    \]
    and a transition rule similar to the above.
    Define also the graph $F_{(j)}$ with vertices
    \[
        F_{(j)}^0 = \{ \big(x_{[\ell, \ell + 2c]}, y_{[\ell - j, \ell + 2c]}\big) \mid x\in E^\infty, y\in F^\infty,  h(x) = y \}
    \]
    and a similar transition rule.
    Then $E_{(0)} = G = F_{(0)}$.
    We will show that $E_{(\ell)}$ and $F_{(\ell)}$ are iterated balanced in-splits of $G$.

    Whenever $\big( x_{[\ell - j, \ell + 2c]}, y_{[\ell, \ell + 2c]} \big)$ is a vertex in $E_{(j)}$ and
    \[
        \big( a x_{[\ell - j, \ell + 2c]}, y_{[\ell, \ell + 2c]}\big), 
        \big( a' x_{[\ell - j, \ell + 2c]}, y_{[\ell, \ell + 2c]}\big),
    \]
    are distinct vertices in $E_{(j + 1)}$, then the vertices have the same future and distinct pasts.
    It follows that $E_{(j + 1)}$ in an in-split graph of $E_{(j)}$ for $j = 0, \ldots, \ell - 1$.
    Fix a vertex $\big(x_{[\ell, \ell + 2c]}, y_{[\ell, \ell + 2c]}\big)\in G^0$ and consider the set
    \[
        A = \{ z\in E^\infty \mid z_{[\ell, \ell + 2c]} = x_{[\ell, \ell + 2c]}, h(z)_{[\ell, \ell + 2c]} = y_{[\ell, \ell + 2c]}\}.
    \]
    It follows from the choice of $c\in \N$ that if $z, z'\in A$ and $h(z)_{[0, \ell)} = h(z')_{[0, \ell)}$ then $z_{[0, \ell)} = z'_{[0, \ell)}$.
    By a symmetric argument using $h^{-1}$, there is a bijection between the words of length $\ell$ which can be appended to $x_{[\ell, \ell + 2c]}$ 
    and the words of length $\ell$ which can be appended to $y_{[\ell, \ell + 2c]}$.
    In particular, if $\ell = 1$ then $E_{(1)}$ and $F_{(1)}$ are balanced in-splits of $G$.

    Assume now that $\ell > 0$ and take $z, z'\in A$ with $z\in Z_E(a_{[0, \ell)})$ and $z'\in Z_E(a_{[0,\ell)})$,
    for some words $a_{[0,\ell)}, a'_{[0,\ell)}\in E^\ell$ with $a_{[1,\ell)} \neq a'_{[1,\ell)}$.
    By the choice of $c$, we see that $h(\sigma_E(z))_{[0, \ell)} \neq h(\sigma_E(z'))_{[0, \ell)}$.
    It follows that there is a bijection between the set of words of length $\ell - 1$ which can be appended to $x_{[\ell, \ell + 2c]}$
    and the set of words of length $\ell - 1$ which can be appended to $y_{[\ell, \ell + 2c]}$.
    Continuing this process we see that for each $j = 0,\ldots, \ell$, the graphs $E_{(\ell - j)}$ and $F_{(\ell - j)}$ are iterated balanced in-splits of $G$.
    In particular, $E_{(\ell)}$ and $F_{(\ell)}$ are $\ell$-step balanced in-splits of $G$.

    It remains to verify that $E_{(\ell)}$ can be reached from $E$ by a finite sequence of out-splits,
    and that $F_{(\ell)}$ can be reached from $F$ by a finite sequence of out-splits.

    For each $i = 0, \ldots, 2c$, consider the graph $E^{(\ell + i)}$ with the set of vertices
    \[
        {(E^{(\ell + i)})}^0 = \big\{ \big(x_{[0, \ell + 2c]}, y_{[\ell, \ell + i]}\big) \mid h(x) = y\big\}
    \]
    and an overlapping transition rule similar to the one described above.
    Then $E^{(\ell)}$ and $E^{[\ell + 2c + 1]}$ are graph isomorphic,
    and it is well-known that $E^{[\ell + 2c + 1]}$ is conjugate to $E$ via a sequence of out-splits, see, e.g.,~\cite[Section 2.4]{LM}.
    Furthermore, $E^{(\ell + (i + 1))}$ is constructed from $E^{(\ell + i)}$ by a sequence of out-splits.
    Indeed, if $\big( x_{(0, \ell + 2c]} z, w_{[\ell, \ell + i]} \big)$ and $\big( x_{(0, \ell + 2c]} z', w'_{[\ell, \ell + i]} \big)$ are distinct vertices 
    which follow $\big( x_{[0, \ell + 2c]}, y_{[\ell, \ell + i]} \big)$ in $E^{(\ell)}$,
    then the latter vertex splits into distinct vertices $\big( x_{[0, \ell + 2c]} z, y_{[\ell, \ell + i]} w_{\ell + i} \big)$ and 
    $\big( x_{[0, \ell + 2c]}, y_{[\ell, \ell + i]} w'_{\ell + i} \big)$ in $E^{(\ell + 1)}$ with identical pasts but distinct futures.
    A similar argument shows that $F_{(\ell)}$ can be reached from $F$ via a sequence of out-splits.
    
    Hence $E$ and $F$ are conjugate to graphs which are the iterated balanced in-split of the graph $G$.
\end{proof}

For $\ell = 0$, we have an immediate consequence.

\begin{corollary}
    One-sided conjugacy among finite graphs with no sinks is generated by out-splits (Move (\texttt{O})).
\end{corollary}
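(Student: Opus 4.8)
The plan is to read off the statement from Theorem~\ref{thm:ec-iterated-balanced} by specializing the lag to $\ell = 0$. First I would record that one-sided conjugacy is exactly eventual conjugacy with $\ell = \ell' = 0$: a homeomorphism $h$ for which both $h$ and $h^{-1}$ are $0$-conjugacies is, by the very definition of a $0$-sliding block code, a bijective sliding block code whose inverse is also a sliding block code, and this is precisely a conjugacy. Conversely, any conjugacy $h$ has $h$ and $h^{-1}$ both sliding block codes, hence both $0$-conjugacies. So the relation we must characterize is the $\ell = 0$ instance of eventual conjugacy.

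One inclusion is immediate: out-splitting produces a conjugate graph (recorded after the definition of Move~(\texttt{O})), and since conjugacy is an equivalence relation, the equivalence relation generated by Move~(\texttt{O}) is contained in one-sided conjugacy. For the reverse inclusion I would invoke the proof of Theorem~\ref{thm:ec-iterated-balanced} with $\ell = 0$. In that proof the auxiliary graphs $E_{(j)}$ and $F_{(j)}$ are indexed by $j = 0, \ldots, \ell$; when $\ell = 0$ only $j = 0$ survives, so the whole family collapses to the single graph $E_{(0)} = G = F_{(0)}$ and no balanced in-split is actually carried out. Thus the iterated balanced in-split part of the argument becomes vacuous, and what remains is exactly the out-split reduction.

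Finally I would extract that out-split reduction, which does not use positivity of $\ell$. The last paragraph of the proof of Theorem~\ref{thm:ec-iterated-balanced} exhibits, for $\ell = 0$, a chain of out-splits from $E$ to $G$: the block graph $E^{[2c + 1]}$ is conjugate to $E$ through out-splits and is isomorphic to $E^{(0)}$, and each $E^{(i + 1)}$ is an out-split of $E^{(i)}$, terminating in $E^{(2c)} = G$. The symmetric argument connects $F$ to the same graph $G$ by out-splits. Hence $E$ and $F$ are each joined to the common graph $G$ by finite sequences of out-splits, so they lie in one class of the relation generated by Move~(\texttt{O}), which yields the remaining inclusion. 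The only point requiring care---and the closest thing to an obstacle---is confirming that the balanced in-split steps genuinely vanish at $\ell = 0$ (they are precisely the steps indexed by $1 \leq j \leq \ell$), so that the construction of $G$ uses out-splits alone; everything else is a direct reading of the cited proof.
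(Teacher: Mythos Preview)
Your proposal is correct and matches the paper's intent: the corollary is stated there without proof as an ``immediate consequence'' of Theorem~\ref{thm:ec-iterated-balanced} at $\ell = 0$, and you have simply unpacked that specialization explicitly, noting that the in-split layers $E_{(j)}$, $F_{(j)}$ collapse to $G$ and only the out-split chain $E \to E^{[2c+1]} \cong E^{(0)} \to \cdots \to E^{(2c)} = G$ (and its $F$-analogue) survives.
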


Finally, we will show that $\ell$-step balanced in-split graphs can be connected by a sequence of elementary balanced in-splits.

\begin{theorem}\label{thm:generated}
    Eventual conjugacy of finite graphs with no sinks is generated by out-splits (Move (\texttt{O})) and elementary balanced in-splits (Move ($\texttt{I+}$)).
\end{theorem}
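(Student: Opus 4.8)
The plan is to prove the two inclusions between eventual conjugacy and the equivalence relation generated by Moves \texttt{(O)} and \texttt{(I+)}. For the soundness direction I would first note that eventual conjugacy is an equivalence relation: it is reflexive (the identity is a $0$-conjugacy), symmetric because its definition imposes symmetric conditions on $h$ and $h^{-1}$, and transitive because the composition of an $\ell$-conjugacy and an $\ell'$-conjugacy is an $(\ell+\ell')$-conjugacy, as recorded after the definition. Since a graph is conjugate (hence eventually conjugate) to each of its out-splits, and since an elementary balanced in-split produces a $1$-conjugacy by Proposition~\ref{prop:I+,eventual-conjugacy}, every single move keeps us inside one eventual-conjugacy class; therefore any finite sequence of Moves \texttt{(O)} and \texttt{(I+)} connects eventually conjugate graphs.

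For completeness, suppose $E$ and $F$ are eventually conjugate. By Theorem~\ref{thm:ec-iterated-balanced} there is a graph $G$ and $\ell$-step balanced in-split graphs $E_{(\ell)}$ and $F_{(\ell)}$ of $G$ with $E$ conjugate to $E_{(\ell)}$ and $F$ conjugate to $F_{(\ell)}$. Since one-sided conjugacy is generated by out-splits (the corollary above, the case $\ell=0$), it suffices to connect $E_{(\ell)}$ and $F_{(\ell)}$ by Moves \texttt{(O)} and \texttt{(I+)}. This is the crux: I must break the \emph{iterated} (multi-step) balanced in-split into \emph{elementary}, single-vertex balanced in-splits, each of which is one Move \texttt{(I+)}.

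To do this I would use the explicit description from the proof of Theorem~\ref{thm:ec-iterated-balanced}, where the vertices of $E_{(\ell)}$ and $F_{(\ell)}$ are the pairs of past-words $(x_{[0,\ell+2c]},y_{[\ell,\ell+2c]})$ and $(x_{[\ell,\ell+2c]},y_{[0,\ell+2c]})$ with $h(x)=y$. I would interpolate with the graphs $H_k$ whose vertices are $(x_{[k,\ell+2c]},y_{[\ell-k,\ell+2c]})$, so that $H_0=E_{(\ell)}$ and $H_\ell=F_{(\ell)}$; passing from $H_k$ to $H_{k+1}$ trades, at every vertex, one remembered symbol of $x$-past for one remembered symbol of $y$-past. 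I would carry out this trade one vertex at a time, interpolating further so that each elementary step alters a single vertex. For such a step the two graphs agree except at one vertex, where one is refined by the preceding $x$-symbol and the other by the preceding $y$-symbol; un-refining that vertex yields a common graph $K$ of which both are in-splits \emph{at the same vertex}. The bijection between appendable words established in the proof of Theorem~\ref{thm:ec-iterated-balanced} guarantees that the two refinements use the \emph{same number} of partition sets, so they form an elementary balanced in-split and are joined by a single Move \texttt{(I+)}.

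The main obstacle is exactly this last verification, performed uniformly over all vertices and all interpolation steps. On the two rails the in-splits occur at copies of previously split vertices, which partition \emph{different} sets of incoming edges, so one cannot naively ``use the same partition''; the equality of the numbers of partition sets must be supplied by the balanced bijection, and one must check that this count at a given vertex is unaffected by the refinements already carried out at the other vertices (in-splitting a source duplicates but never deletes the underlying transitions, so the set of admissible preceding symbols is preserved). Once this bookkeeping is settled, chaining the single-vertex Moves \texttt{(I+)} along the interpolation connects $E_{(\ell)}$ to $F_{(\ell)}$, and composing with the out-splits that relate $E$ to $E_{(\ell)}$ and $F$ to $F_{(\ell)}$ completes the argument.
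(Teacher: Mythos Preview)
Your soundness direction and the reduction via Theorem~\ref{thm:ec-iterated-balanced} are correct, but the heart of your completeness argument takes a different route from the paper and contains a genuine gap.

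You propose to interpolate between $E_{(\ell)}$ and $F_{(\ell)}$ through graphs $H_k$ with vertex set $\{(x_{[k,\ell+2c]},y_{[\ell-k,\ell+2c]})\}$, trading one symbol of $x$-past for one of $y$-past at each stage, and to break each stage into single-vertex moves. The problem is your claim that the bijection from the proof of Theorem~\ref{thm:ec-iterated-balanced} supplies the balanced count at each such vertex. That bijection is established only at vertices of $G$, i.e.\ at pairs $(x_{[\ell,\ell+2c]},y_{[\ell,\ell+2c]})$ with \emph{no} extra memory on either side. At a mixed-memory vertex $(x_{[k+1,\ell+2c]},y_{[\ell-k,\ell+2c]})$ of your common amalgamation $K_k$, the candidate map $x_k\mapsto y_{\ell-k-1}$ is not well defined: since $\ell-k-1<\ell$, the coordinate $h(z)_{\ell-k-1}$ depends on $z_{[0,\ell-k-1+c]}$, and in particular on $z_0,\dots,z_{k-1}$, which are not determined by the data you retain. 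So the equality of the two partition sizes at such a vertex is not ``bookkeeping''; it is an unproved assertion, and I do not see how to rescue it with the tools you cite.

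The paper avoids this obstacle entirely by working not with the explicit path-space model but with the abstract chain $G=E_{(0)},E_{(1)},\dots,E_{(\ell)}$ of single-vertex in-splits. The key device is the \emph{trivial} in-split: from $E_{(\ell-1)}$ one forms $E_{(\ell-1,\ell-2)}$ by in-splitting at $v_{(\ell-1)}$ with the same number $n_{(\ell-1)}$ of partition sets but placing every edge in the first set, so the remaining sets are empty and create sources. By construction $E_{(\ell)}$ and $E_{(\ell-1,\ell-2)}$ are an elementary balanced in-split of $E_{(\ell-1)}$, and $E_{(\ell-1,\ell-2)}$ is also an in-split of a graph $E_{(\ell-2)}'$ obtained from $E_{(\ell-2)}$ by attaching those same sources. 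Iterating this telescoping down both rails produces a common graph $G'$ ($=G$ with all the accumulated sources) of which $E_{(1,0)}$ and $F_{(0,1)}$ are an elementary balanced in-split; the whole chain uses $2\ell-1$ moves of type \texttt{(I+)}. Here the equality of partition sizes is automatic, because the trivial in-split is \emph{chosen} to have $n_{(i)}$ sets. This is why the paper emphasises that allowing sources is essential to the proof.
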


\begin{proof}
    Suppose $E$ and $F$ are eventually conjugate graphs.
    We know from Theorem~\ref{thm:ec-iterated-balanced} that $E$ and $F$ are conjugate (via a sequence of out-splits) to graphs which
    are $\ell$-step balanced in-splits of a graph $G$.
    We may assume that $\ell\geq 2$.
    If $E$ and $F$ are $\ell$-step balanced in-splits of $G$,
    we will show that they can be connected by a finite sequence of elementary balanced in-splits.

    Let $(E_{(1)}, \ldots,E_{(\ell)})$ and $(F_{(1)}, \ldots,F_{(\ell)})$ be an $\ell$-step balanced in-split connecting $E_{(\ell)} = E$ and $F_{(\ell)} = F$.
    More specifically, for $i = 0,\ldots,\ell - 1$, the graph $E_{(i + 1)}$ is an in-split of $E_{(i)}$ at the vertex $v_{(i)}$ using $n_{(i)}$ sets.
    By a slight abuse of notation, we use the same symbol $v_{(i)}$ for the vertex in $F_{(i)}$ which is being in-split using $n_{(i)}$ sets to construct $F_{(i + 1)}$.
    We let $q_{E_{(i + 1)}}\colon E_{(i + 1)}^0 \LRA E_{(i)}^0$ denote the canonical surjection of vertices which simply forgets the labeling.

    Construct a graph $E_{(\ell - 1, \ell - 2)}$ as an in-split of $E_{(\ell - 1)}$ at $v_{(\ell - 1)}$ using $n_{(\ell - 1)}$ sets
    where every edge is placed in the first partition set.
    This introduces $n_{(\ell - 1)} - 1$ sources
    \[
        \{v_{(\ell - 1)}^2,\ldots,v_{(\ell - 1)}^{n_{(\ell - 1)}}\} \subset E_{(\ell - 1, \ell - 2)}^0,
    \]
    and the graphs $E_{(\ell - 1, \ell - 2)}$ and $E_{(\ell)}$ are elementary balanced in-splits of $E_{(\ell - 1)}$, by construction.
    Next we construct a graph $E_{(\ell - 2)}'$ as $E_{(\ell - 2)}$ with sources attached.
    More precisely, 
    \begin{align*}
        {(E_{(\ell - 2)}')}^0 &= E_{(\ell - 2)}^0 \cup \{v_{(\ell - 1)}^j \mid j = 2, \ldots, n_{(\ell - 1)} \},  \\
        {(E_{(\ell - 2)}')}^1 &= E_{(\ell - 2)}^1 \cup \{ e^j \mid  j = 2,\ldots,n_{(\ell - 2)}, e\in s_{E_{(\ell - 1)}}^{-1}(v_{(\ell - 1)}) \},
    \end{align*}
    with $s_{E_{(\ell - 2)}'}(e^j) = v_{(\ell - 1)}^j$ and $r_{E_{(\ell - 2)}'}(e^j) = q_{E_{(\ell - 1)}}(r_{E_{(\ell - 1)}}(e))$.
    The notation reflects the fact that $E_{(\ell - 1, \ell - 2)}$ is an in-split of $E_{(\ell - 1)}$ (at $v_{(\ell - 1)}$) 
    and an in-split of $E_{(\ell - 2)}'$ (at $v_{(\ell - 2)}$).

    If $\ell = 2$, we apply a similar procedure to $F_{(\ell)}$ to obtain a graph $F_{(\ell - 2, \ell - 1)} = F_{(0,1)}$,
    and $F_{(0,1)}$ and $F_{(2)}$ are elementary balanced in-splits of $F_{(1)}$.
    Furthermore, $E_{(1,0)}$ and $F_{(0,1)}$ are elementary balanced in-splits of $G' = E_{(0)}' = F_{(0)}'$ at $v_{(0)}$ using $n_{(0)}$ sets.

    If $\ell > 2$, construct the graph $E_{(\ell - 2, \ell - 3)}$ as the in-split of $E_{(\ell - 2)}'$ at $v_{(\ell - 2)}$ using $n_{(\ell - 2)}$ sets
    where every edge is placed in the first partition set.
    This introduces $n_{(\ell - 2)} - 1$ sources, and $E_{(\ell - 1, \ell - 2)}$ and $E_{(\ell - 2, \ell - 3)}$ are elementary balanced in-splits of $E_{(\ell - 2)}'$.
    Next construct the graph $E_{(\ell - 3)}'$ as $E_{(\ell - 3)}$ with additional sources attached as follows:
    \begin{align*}
        {(E_{\ell - 3}')}^0 &= 
        E_{\ell - 3}^0 \cup \{v_{\ell - 1}^j \mid j = 2,\ldots, n_{(\ell - 1)} \} \cup \{v_{(\ell - 2)}^k \mid k = 2,\ldots, n_{(\ell - 2)} \}, \\
        {E_{(\ell - 3)'}}^1 &=
            E_{(\ell - 3)}^1 \cup \{e^j \mid j = 2,\ldots, n_{(\ell - 1)}, e\in s_{E_{(\ell - 1)}}^{-1}(v_{(\ell - 1)}) \} \\
            & \quad \cup \{ f^k \mid k = 2,\ldots,n_{(\ell - 2)}, f\in s_{E_{(\ell - 2)}}^{-1}(v_{(\ell - 2)})\}
    \end{align*}
    with $s(e^j) = v_{(\ell - 1)}^j$ and $s(f^k) = v_{(\ell - 2)}^k$, and
    \[
        r(e^j) = q_{E_{(\ell - 2)}}\circ q_{E_{(\ell - 1)}}(r_{E_{(\ell - 1)}}(e)), \quad
        r(f^k) = q_{E_{(\ell - 2)}}(r_{E_{(\ell - 2)}}(f)).
    \]
    Note that $E_{(\ell - 2, \ell - 3)}$ is an in-split of $E_{(\ell - 3)}'$ (at $v_{(\ell - 3)}$).
    
    Applying the same procedure starting in $F_{(\ell)}$ and iterating the process, we finally obtain a graph $G' = E_{(0)}' = F_{(0)}'$ with $\sum_{i = 0}^{\ell - 1} (n_{(i)} - 1)$ sources attached,
    and graphs $E_{(1,0)}$ and $F_{(0,1)}$ which are elementary balanced in-splits of $G'$.
    Therefore, $E_{(\ell)}$ and $F_{(\ell)}$ are connected by $2 \ell - 1$ elementary balanced in-splits.
\end{proof}

\begin{remark}\label{rem:picture}
For $\ell = 2$, the procedure in the proof above looks like this
\begin{figure}[H]
    \begin{center}
    \begin{tikzpicture}
    [scale=5, 
    node distance =1cm,
    thick,
    vertex/.style={inner sep=0pt, circle, fill=black}]
        \node (G) {$G$};
        \node[below of = G] (G1) {$G'$};
        \node[left of = G1] (N1) {};
        \node[right of = G1] (N2) {};
        \node[right of = N2] (F1) {$F_{(1)}$};
        \node[left of = N1] (E1) {$E_{(1)}$};
        \node[below of = E1] (NE1) {};
        \node[below of = F1] (NF1) {};
        \node[left of = NE1] (E2) {$E_{(2)}$};
        \node[right of = NE1] (E2') {$E_{(1,0)}$};
        \node[right of = NF1] (F2) {$F_{(2)}$};
        \node[left of = NF1] (F2') {$F_{(0, 1)}$};
        
        \draw (G) to (E1);
        \draw (G) to (F1);
        \draw (E1) to (E2);
        \draw (F1) to (F2);
        \draw (E1) to (E2');
        \draw (F1) to (F2');
        \draw (G1) to (E2');
        \draw (G1) to (F2');
    \end{tikzpicture}
    \end{center}
\end{figure}
\noindent We illustrate this in Example~\ref{ex:example2} below.
\end{remark}

\begin{remark}
    In the proof of Theorem~\ref{thm:generated} it is crucial to make use of graphs with sources in order to connect $\ell$-step balanced in-split graphs by elementary balanced in-splits.
    This indicated the usefulness of considering the class of finite graphs with no sources instead or the smaller class of essential finite graphs.
    We do not know if it possible to prove this result circumventing graphs with sources.
\end{remark}

\begin{example}\label{ex:example2}
    In this example we generate $\ell$-conjugate graphs.
    Consider the graph
\begin{figure}[H]
    \begin{center}
    \begin{tikzpicture}
    [scale=5, ->-/.style={thick, decoration={markings, mark=at position 0.6 with {\arrow{Straight Barb[line width=0pt 1.5]}}},postaction={decorate}},
    node distance =2cm,
    thick,
    vertex/.style={inner sep=0pt, circle, fill=black}]
        \node (G) {$G:$};
        \node[vertex, right of = G, label=above:{$w$}] (G1) {.};
        \node[vertex, right of = G1] (G2) {.};
        \node[vertex, below of = G2, label=below:{$v$}] (G3) {.};

        \draw[->-] (G1) to (G2);
        \draw[->-] (G2) to (G3);
        \draw[->-, bend right, out=325] (G2) to (G3);
        \draw[->-, bend left] (G2) to (G3);
        \draw[->-, bend left, out=65, in=115] (G2) to (G3);
        \draw[->-, bend left, out=50, in=120] (G3) to node[below] {$e$} (G1);
    \end{tikzpicture}
    \end{center}
\end{figure}
\noindent where $e$ is a path of length $\ell - 1$.
If we perform balanced in-splits along the vertices of the path $e$ ending at $w$ we will obtain $\ell$-conjugate graphs.
The case with $\ell = 1$ and $v = w$ appeared in~\cite[Example 3.6]{BC}.

We illustrate the procedure when $\ell = 2$ and $e$ is a path of length one.
A balanced in-split at $v$ is given by
\begin{figure}[H]
    \begin{center}
    \begin{tikzpicture}
    [scale=5, ->-/.style={thick, decoration={markings, mark=at position 0.6 with {\arrow{Straight Barb[line width=0pt 1.5]}}},postaction={decorate}},
    node distance =1.7cm,
    thick,
    vertex/.style={inner sep=0pt, circle, fill=black}]
    \node (E) {$E_{(1)}:$};
        \node[vertex, right of = E, label=above:{$w^1$}] (E1) {.};
        \node[vertex, below of = E1] (E2) {.};
        \node[vertex, left of = E2, label=below:{$v^1$}] (E3) {.};
        \node[vertex, right of = E2, label=below:{$v^2$}] (E4) {.};
        \node[right of = E1] (E5) {};

        \draw[->-] (E1) to (E2);
        \draw[->-, bend left] (E2) to (E3);
        \draw[->-, bend right, out=325] (E2) to (E3);
        \draw[->-, bend right] (E2) to (E4);
        \draw[->-, bend left] (E2) to (E4);
        \draw[->-, bend left] (E3) to node[left] {$e^1$} (E1);
        \draw[->-, bend right] (E4) to node[right] {$e^2$} (E1);

        \node (F) [right of = E5] {$F_{(1)}:$};
        \node[vertex, right of = F, label=above:{$w^1$}] (F1) {.};
        \node[vertex, below of = F1] (F2) {.};
        \node[vertex, left of = F2, label=below:{$v^1$}] (F3) {.};
        \node[vertex, right of = F2, label=below:{$v^2$}] (F4) {.};

        \draw[->-] (F1) to (F2);
        \draw[->-, bend left] (F2) to (F3);
        \draw[->-, bend right, out=325] (F2) to (F3);
        \draw[->-, bend right] (F2) to (F4);
        \draw[->-] (F2) to (F3);
        \draw[->-, bend left] (F3) to node[left] {$e^1$} (F1);
        \draw[->-, bend right] (F4) to node[right] {$e^2$} (F1);
    \end{tikzpicture}
    \end{center}
\end{figure}
Next, we perform a balanced in-split at $w^1$ to get

\begin{figure}[H]
    \begin{center}
    \begin{tikzpicture}
    [scale=5, ->-/.style={thick, decoration={markings, mark=at position 0.6 with {\arrow{Straight Barb[line width=0pt 1.5]}}},postaction={decorate}},
    node distance =1.7cm,
    thick,
    vertex/.style={inner sep=0pt, circle, fill=black}]
    \node (E) {$E_{(2)}:$};
        \node[vertex, right of = E, label=above:{$w^{1,1}$}] (E1) {.};
        \node[right of = E1] (N1) {};
        \node[vertex, right of = N1, label=above:{$w^{1, 2}$}] (E2) {.};
        \node[vertex, below of = N1] (E3) {.};
        \node[below of = N1] (N3) {};
        \node[vertex, left of = N3, label=below:{$v^{1,1}$}] (E4) {.};
        \node[vertex, right of = N3, label=below:{$v^{2,1}$}] (E5) {.};
        \node[right of = E2] (N2) {};

        \draw[->-] (E1) to (E3);
        \draw[->-] (E2) to (E3);
        \draw[->-, bend left] (E3) to (E4);
        \draw[->-, bend right] (E3) to (E4);
        \draw[->-, bend right] (E3) to (E5);
        \draw[->-, bend left] (E3) to (E5);
        \draw[->-, bend left] (E4) to node[left] {$e^{1,1}$} (E1);
        \draw[->-, bend right] (E5) to node[right] {$e^{2,1}$} (E2);

    \node (F) [right of = E2] {$F_{(2)}:$};
    \node[vertex, right of = F, label=above:{$w^{1,1}$}] (F1) {.};
        \node[right of = F1] (M1) {};
        \node[vertex, right of = M1, label=above:{$w^{1,2}$}] (F2) {.};
        \node[vertex, below of = M1] (F3) {.};
        \node[below of = M1] (M3) {};
        \node[vertex, left of = M3, label=below:{$v^{1,1}$}] (F4) {.};
        \node[vertex, right of = M3, label=below:{$v^{2,1}$}] (F5) {.};
        \node[right of = F2] (M2) {};

        \draw[->-] (F1) to (F3);
        \draw[->-] (F2) to (F3);
        \draw[->-, bend left] (F3) to (F4);
        \draw[->-, bend right] (F3) to (F4);
        \draw[->-] (F3) to (F4);
        \draw[->-, bend right] (F3) to (F5);
        \draw[->-, bend left] (F4) to node[left] {$e^{1,1}$} (F1);
        \draw[->-, bend right] (F5) to node[right] {$e^{2,1}$} (F2);
    \end{tikzpicture}
    \end{center}
\end{figure}
\noindent and the graphs $E_{(2)}$ and $F_{(2)}$ are $2$-conjugate.

Finally, we use the procedure of the proof of Theorem~\ref{thm:generated} for this example, see the picture of Remark~\ref{rem:picture}.
Observe that the graphs $E_{(1,0)}$ and $F_{(0,1)}$ below
\begin{figure}[H]
    \begin{center}
    \begin{tikzpicture}
    [scale=5, ->-/.style={thick, decoration={markings, mark=at position 0.6 with {\arrow{Straight Barb[line width=0pt 1.5]}}},postaction={decorate}},
    node distance =1.7cm,
    thick,
    vertex/.style={inner sep=0pt, circle, fill=black}]
    \node (E) {$E_{(1,0)}:$};
        \node[vertex, right of = E, label=above:{$w^{1,1}$}] (E1) {.};
        \node[right of = E1] (N1) {};
        \node[vertex, right of = N1, label=above:{$w^{1, 2}$}] (E2) {.};
        \node[vertex, below of = N1] (E3) {.};
        \node[below of = N1] (N3) {};
        \node[vertex, left of = N3, label=below:{$v^{1,1}$}] (E4) {.};
        \node[vertex, right of = N3, label=below:{$v^{2,1}$}] (E5) {.};
        \node[right of = E2] (N2) {};

        \draw[->-] (E1) to (E3);
        \draw[->-] (E2) to (E3);
        \draw[->-, bend left] (E3) to (E4);
        \draw[->-, bend right] (E3) to (E4);
        \draw[->-, bend right] (E3) to (E5);
        \draw[->-, bend left] (E3) to (E5);
        \draw[->-, bend left] (E4) to node[left] {$e^{1,1}$} (E1);
        \draw[->-] (E5) to node[above right] {$e^{2,1}$} (E1);

    \node (F) [right of = E2] {$F_{(0,1)}:$};
    \node[vertex, right of = F, label=above:{$w^{1,1}$}] (F1) {.};
        \node[right of = F1] (M1) {};
        \node[vertex, right of = M1, label=above:{$w^{1,2}$}] (F2) {.};
        \node[vertex, below of = M1] (F3) {.};
        \node[below of = M1] (M3) {};
        \node[vertex, left of = M3, label=below:{$v^{1,1}$}] (F4) {.};
        \node[vertex, right of = M3, label=below:{$v^{2,1}$}] (F5) {.};
        \node[right of = F2] (M2) {};

        \draw[->-] (F1) to (F3);
        \draw[->-] (F2) to (F3);
        \draw[->-, bend left] (F3) to (F4);
        \draw[->-, bend right] (F3) to (F4);
        \draw[->-] (F3) to (F4);
        \draw[->-, bend right] (F3) to (F5);
        \draw[->-, bend left] (F4) to node[left] {$e^{1,1}$} (F1);
        \draw[->-] (F5) to node[above right] {$e^{2,1}$} (F1);
    \end{tikzpicture}
    \end{center}
\end{figure}
\noindent are elementary balanced in-splits of
\begin{figure}[H]
    \begin{center}
    \begin{tikzpicture}
    [scale=5, ->-/.style={thick, decoration={markings, mark=at position 0.6 with {\arrow{Straight Barb[line width=0pt 1.5]}}},postaction={decorate}},
    node distance =2cm,
    thick,
    vertex/.style={inner sep=0pt, circle, fill=black}]
        \node (G) {$G':$};
        \node[vertex, right of = G, label=above:{$w$}] (G1) {.};
        \node[vertex, right of = G1] (G2) {.};
        \node[vertex, below of = G2, label=below:{$v$}] (G3) {.};
        \node[vertex, right of = G2] (G4) {.};

        \draw[->-] (G1) to (G2);
        \draw[->-] (G2) to (G3);
        \draw[->-, bend right, out=325] (G2) to (G3);
        \draw[->-, bend left] (G2) to (G3);
        \draw[->-, bend left, out=65, in=115] (G2) to (G3);
        \draw[->-, bend left, out=50, in=120] (G3) to node[below] {$e$} (G1);
        \draw[->-] (G4) to (G2);
    \end{tikzpicture}
    \end{center}
\end{figure}
\noindent Hence $E_{(2)}$ and $F_{(2)}$ are connected by three elementary balanced in-splits.

\end{example}


\section{Balanced strong shift equivalence}
Williams introduced the notion of strong shift equivalence of nonnegative integer matrices and showed that a pair of shifts of finite type are two-sided conjugate 
if and only if their defining matrices are strong shift equivalent.
In this section, we introduce balanced strong shift equivalence of nonnegative integer matrices and show that a pair of finite graphs with no sinks are eventually conjugate
if and only if they are conjugate to graphs whose adjacency matrices are balanced strong shift equivalent.

\begin{definition}
    A \emph{division matrix} is a finite rectangular $\{0,1\}$-matrix in which every column contains exactly one nonzero entry
    and every row contains at least one nonzero entry.
    The transpose of a division matrix is called an \emph{amalgamation matrix}.
\end{definition}

Given a graph $E$, its adjacency matrix is a nonnegative $E^0\times E^0$-matrix $\textbf{A}_E$ whose $(i,j)$-entry is the number of edges in $E$ from $i$ to $j$.
The adjacency matrix of a graph is unique up to a permutation of vertices.
A classical result relates an out-splitting of a graph to their adjacency matrices.
The theorem below is stated in~\cite[Theorem 2.4.14]{LM} for essential graphs but the proof carries through with the existence of sources.

\begin{theorem}\label{thm:out-in-split}
    Let $E$ and $F$ be finite graphs with no sinks.
    Then $F$ is an out-split of $E$ if and only if there exist a division matrix $\mathbf{D}$ and a nonnegative rectangular matrix $\mathbf{E}$ such that
    \[
        \mathbf{A}_E = \mathbf{DE}, \qquad \mathbf{A}_F = \mathbf{ED}.
    \]
    Similarly, $F$ is an in-split of $E$ if and only if there exist a division matrix $\mathbf{D}$ and a nonnegative rectangular matrix $\mathbf{E}$ such that
    \[
        \mathbf{A}_E = \mathbf{E} \mathbf{D}^t, \qquad \mathbf{A}_F = \mathbf{D}^t \mathbf{E}.
    \]
    Here, $\mathbf{A}_E$ and $\mathbf{A}_F$ are the adjacency matrices of $E$ and $F$, respectively.
\end{theorem}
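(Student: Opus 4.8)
The plan is to prove the out-split equivalence by writing down the factorizing matrices explicitly and verifying the two identities by a direct edge-count, and then to obtain the in-split equivalence as its mirror image, with source and range interchanged. Suppose first that $F$ is the out-split of $E$ determined by partitions $s_E^{-1}(v) = \E_v^1 \amalg \cdots \amalg \E_v^{n_v}$ for $v \in E^0$ (for a single application of Move~\texttt{(O)} one takes $n_w = 1$ at every unsplit vertex $w$), so that $F^0 = \{ v^i \mid v\in E^0,\ 1 \leq i \leq n_v \}$. I would define $\mathbf{D}$ to be the $E^0 \times F^0$ matrix with $\mathbf{D}_{v,u^i} = 1$ when $u = v$ and $0$ otherwise, and $\mathbf{E}$ to be the $F^0 \times E^0$ matrix whose $(v^i, w)$-entry is the number of edges from $v$ to $w$ lying in $\E_v^i$. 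Each column of $\mathbf{D}$ has exactly one nonzero entry (each split vertex $u^i$ has a unique parent $u$) and each row has at least one (each $v$ splits into $n_v \geq 1$ copies), so $\mathbf{D}$ is a division matrix. A one-line computation then gives $(\mathbf{D}\mathbf{E})_{v,w} = \sum_i \mathbf{E}_{v^i,w}$, which is the total number of edges from $v$ to $w$, that is $(\mathbf{A}_E)_{v,w}$; and $(\mathbf{E}\mathbf{D})_{v^i,w^j} = \mathbf{E}_{v^i,w}$, which is exactly the number of edges from $v^i$ to $w^j$ in $F$, that is $(\mathbf{A}_F)_{v^i,w^j}$.

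For the converse I would start from a division matrix $\mathbf{D}$ and a nonnegative $\mathbf{E}$ satisfying the two identities: the single nonzero entry in each column of $\mathbf{D}$ yields a surjection $F^0 \LRA E^0$ whose fibres are the prospective copies $v^1,\ldots,v^{n_v}$, and the identity $\sum_i \mathbf{E}_{v^i,w} = (\mathbf{A}_E)_{v,w}$ lets me distribute the actual edges out of $v$ into $n_v$ classes so that precisely $\mathbf{E}_{v^i,w}$ of those reaching $w$ are assigned to $\E_v^i$; one then reads off from $\mathbf{A}_F = \mathbf{E}\mathbf{D}$ that the resulting graph is exactly this out-split. The in-split case is the same bookkeeping applied to the partitions $\E^v_1 \amalg \cdots \amalg \E^v_n$ of the \emph{incoming} edges $r_E^{-1}(v)$: one again takes $\mathbf{D}$ to be the division matrix recording copies (now $E^0 \times F^0$) and $\mathbf{E}$ the $E^0 \times F^0$ matrix of class-refined incoming counts, and the analogous computation yields $\mathbf{A}_E = \mathbf{E}\mathbf{D}^t$ and $\mathbf{A}_F = \mathbf{D}^t\mathbf{E}$, the amalgamation matrix $\mathbf{D}^t$ now playing the role $\mathbf{D}$ had before. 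Equivalently one may reverse all edges and invoke the out-split case for the transposed adjacency matrices, since both the edge-counting identities and the division-matrix conditions are purely combinatorial and indifferent to the direction of the edges.

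I expect the main obstacle to be the converse direction and the bookkeeping around sources, rather than the forward computation, which is routine. Two points need care. First, the index conventions must be fixed so that $\mathbf{D}$ (rows indexed by $E^0$, columns by $F^0$) is genuinely a division matrix and $\mathbf{D}^t$ is the amalgamation matrix appearing in the in-split identities; swapping these silently is the easiest way to derive the wrong statement. Second, reconstructing an honest Move~\texttt{(O)} or Move~\texttt{(I-)} from abstract data $(\mathbf{D},\mathbf{E})$ requires distributing physical edges consistently with the entries of $\mathbf{E}$, and here one must allow empty partition classes in the in-split case: an empty incoming class $\E^v_j$ corresponds to a zero column of $\mathbf{E}$ and produces exactly the new source vertex $v^j$ permitted by Move~\texttt{(I-)}. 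This is the only adaptation of the essential-graph argument of~\cite[Theorem 2.4.14]{LM} that is needed, and verifying it amounts to checking that nowhere does the classical proof invoke the absence of sources — which indeed it does not, since the identities and the division-matrix conditions are unaffected by vertices with empty incoming (or outgoing) edge sets.
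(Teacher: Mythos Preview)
The paper does not give its own proof of this theorem: it simply cites \cite[Theorem 2.4.14]{LM} and asserts that the argument there carries through when sources are allowed. Your proposal is precisely that argument made explicit --- the division matrix recording the parent of each split vertex, the edge matrix recording class-refined edge counts, and the observation that empty partition classes in Move~\texttt{(I-)} correspond to zero columns in $\mathbf{E}$ --- so it matches what the paper invokes and fills in the one detail the paper leaves to the reader.
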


Let $G$ be a finite graph with no sinks and suppose $E$ is an in-split graph of $G$.
Fix an ordering of $E^0$ and $F^0$, and let $|E^0|$ and $|F^0|$ denote the number of vertices in the graphs, respectively.
We can construct a division matrix \textbf{D} as the $|E^0|\times |F^0|$ $\{0,1\}$-matrix subject to the condition that $\textbf{D}(v,w) = 1$ 
if and only if $w\in E^0$ is one of the split vertices of $v\in G^0$.
The matrix \textbf{E} is the $|E^0|\times |F^0|$ nonnegative integer matrix (with the same ordering of the vertices in $E$ and $F$ as for \textbf{D}) for which
$\textbf{E}(v,w)$ is the number of edges in $G$ which are emitted from $i\in G^0$ and contained in the partition set corresponding to $w\in E^0$.
In~\cite{LM}, the matrix \textbf{E} is called the \emph{edge matrix} of the split.

The fact that we allow empty partition sets for in-splits, Move (\texttt{I-}), is reflected in \textbf{E} as zero columns, and it has no effect on \textbf{D}.
From this description we also see that if $F$ is another in-split graph of $G$, then $E$ and $F$ are balanced in-splits if and only if the division matrices agree (up to permutation of vertices).

\begin{proposition}\label{prop:balanced-elementary-eq}
    Let $E$ and $F$ be finite graphs with no sinks.
    Then $E$ and $F$ are elementary balanced in-splits of a graph $G$ if and only if there exist an amalgamation matrix $\mathbf{S}$ and nonnegative rectangular matrices $\mathbf{R}_E$ and $\mathbf{R}_F$
    such that
    \[
        \mathbf{A}_E = \mathbf{S}\mathbf{R}_E, \quad
        \mathbf{A}_F = \mathbf{S}\mathbf{R}_F, \quad
        \mathbf{R}_E \mathbf{S} = \mathbf{R}_F \mathbf{S},
    \]
    where $\textbf{A}_E$ and $\textbf{A}_F$ are the adjacency matrices of $E$ and $F$, respectively.
\end{proposition}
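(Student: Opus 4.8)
The plan is to reduce everything to the matrix characterization of a single in-split given in Theorem~\ref{thm:out-in-split}, combined with the observation recorded just above the proposition: two in-split graphs of a common graph $G$ are balanced precisely when their division matrices agree (up to a permutation of vertices). Writing $\mathbf{S} = \mathbf{D}^t$ for the common amalgamation matrix, the three matrix relations in the statement should fall out directly, with the cross-relation $\mathbf{R}_E \mathbf{S} = \mathbf{R}_F \mathbf{S}$ encoding the single constraint that the two in-splits sit over the \emph{same} graph $G$.

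For the forward implication, I would assume that $E$ and $F$ are elementary balanced in-splits of $G$. By Theorem~\ref{thm:out-in-split} there is a division matrix $\mathbf{D}$ and edge matrices $\mathbf{E}_E, \mathbf{E}_F$ with $\mathbf{A}_G = \mathbf{E}_E \mathbf{D}^t = \mathbf{E}_F \mathbf{D}^t$, together with $\mathbf{A}_E = \mathbf{D}^t \mathbf{E}_E$ and $\mathbf{A}_F = \mathbf{D}^t \mathbf{E}_F$; here the division matrix is common to both splits by the balancing observation. Setting $\mathbf{S} = \mathbf{D}^t$, $\mathbf{R}_E = \mathbf{E}_E$ and $\mathbf{R}_F = \mathbf{E}_F$, the first two equations are immediate, and the third follows since $\mathbf{R}_E \mathbf{S} = \mathbf{E}_E \mathbf{D}^t = \mathbf{A}_G = \mathbf{E}_F \mathbf{D}^t = \mathbf{R}_F \mathbf{S}$.

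For the converse, I would start from the data $(\mathbf{S}, \mathbf{R}_E, \mathbf{R}_F)$ and put $\mathbf{D} = \mathbf{S}^t$, which is a division matrix by definition of an amalgamation matrix. I then define $G$ to be a graph whose adjacency matrix is $\mathbf{A}_G := \mathbf{R}_E \mathbf{S} = \mathbf{R}_F \mathbf{S}$; this is a square nonnegative integer matrix of the correct size, since the shapes forced by $\mathbf{A}_E = \mathbf{S}\mathbf{R}_E$ and $\mathbf{A}_F = \mathbf{S}\mathbf{R}_F$ make both $\mathbf{R}_E\mathbf{S}$ and $\mathbf{R}_F \mathbf{S}$ square of size $|G^0|$, the number of columns of $\mathbf{S}$. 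With $\mathbf{A}_G = \mathbf{R}_E \mathbf{S}$ and $\mathbf{A}_E = \mathbf{S}\mathbf{R}_E = \mathbf{D}^t \mathbf{R}_E$, Theorem~\ref{thm:out-in-split} (with edge matrix $\mathbf{R}_E$) exhibits $E$ as an in-split of $G$, and symmetrically $F$ is an in-split of $G$. Since both splits use the same division matrix $\mathbf{D}$ and hence the same number of partition sets, $E$ and $F$ are elementary balanced in-splits of $G$.

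The main obstacle, and really the only place needing care, is to confirm in the converse that $G$ is a legitimate finite graph with no sinks, so that Theorem~\ref{thm:out-in-split} genuinely applies. I would argue this from the structure of $\mathbf{S}$: each row of $\mathbf{S} = \mathbf{D}^t$ has exactly one nonzero entry while each column has at least one, so the relation $\mathbf{A}_E = \mathbf{S}\mathbf{R}_E$ identifies each row of $\mathbf{A}_E$ with a row of $\mathbf{R}_E$ and, because $E$ has no sinks, forces every row of $\mathbf{R}_E$ that is hit---hence every row, since every column of $\mathbf{S}$ is nonzero---to be nonzero. A nonzero row of $\mathbf{R}_E$ together with the fact that every row of $\mathbf{S}$ is nonzero (and all entries are nonnegative, so no cancellation occurs) then yields a nonzero row in $\mathbf{A}_G = \mathbf{R}_E\mathbf{S}$, so $G$ has no sinks. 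Beyond this bookkeeping, the argument is purely formal.
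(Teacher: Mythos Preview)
Your proposal is correct and follows essentially the same approach as the paper: both directions are reduced to Theorem~\ref{thm:out-in-split} together with the observation that balancing corresponds to equality of the division matrices, and one then sets $\mathbf{S}=\mathbf{D}^t$. Your write-up is in fact more explicit than the paper's in the converse direction (constructing $G$ via $\mathbf{A}_G:=\mathbf{R}_E\mathbf{S}$) and adds the verification that $G$ has no sinks, which the paper leaves implicit.
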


\begin{proof}
    The graph $E$ is an in-split of $G$ if and only if there exist a division matrix $\textbf{D}_E$ and a nonnegative rectangular matrix $\textbf{R}_E$ such that
    $\textbf{A}_E = \textbf{D}_E^t \textbf{R}_E$ and $\textbf{A}_G = \textbf{R}_E \textbf{D}_E^t$.
    Similarly, $F$ is an in-split of $G$ if and only if there exist a division matrix $\textbf{D}_F$ and a nonnegative rectangular matrix $\textbf{R}_F$ such that
    $\textbf{A}_F = \textbf{D}_F^t \textbf{R}_F$ and $\textbf{A}_G = \textbf{R}_F \textbf{D}_F^t$.
    By the observation above, the in-splittings are balanced exactly when $\textbf{D}_E = \textbf{D}_F$.
    Putting $\textbf{S} := \textbf{D}_E^t = \textbf{D}_F^t$ finishes the proof.
\end{proof}

The proposition above is a balanced analog of Theorem~\ref{thm:out-in-split} and this motivates the next definition.

\begin{definition}
    A pair of nonnegative integer matrices \textbf{A} and \textbf{B} are \emph{balanced elementary equivalent} if there exist 
    nonnegative rectangular matrices \textbf{S}, $\textbf{R}_\textbf{A}$ and $\textbf{R}_\textbf{B}$ such that
    \begin{enumerate}
        \item[(i)] $\textbf{A} = \textbf{S} \textbf{R}_\textbf{A}$,
        \item[(ii)] $\textbf{B} = \textbf{S} \textbf{R}_\textbf{B}$, 
        \item[(iii)] $\textbf{R}_\textbf{A} \textbf{S} = \textbf{R}_\textbf{B} \textbf{S}$,
    \end{enumerate}
    in which case the triple $(\textbf{R}_\textbf{A},\textbf{S}, \textbf{R}_\textbf{B})$ is a balanced elementary equivalence from \textbf{A} to \textbf{B}.
    The matrices \textbf{A} and \textbf{B} are \emph{balanced strong shift equivalent} if there is a finite sequence of balanced elementary equivalences connecting \textbf{A} to \textbf{B}.
\end{definition}

Let us discuss some immediate consequences of this definition.
\begin{itemize}
    \item If \textbf{A} and \textbf{B} are balanced elementary equivalent via $(\textbf{R}_\textbf{A}, \textbf{S}, \textbf{R}_\textbf{B})$,
        then \textbf{A} and \textbf{B} are necessarily square with the same dimensions and this allows us to identify the vertices of the graphs determined by \textbf{A} and \textbf{B}.
        The matrices $\textbf{R}_\textbf{A}$ and $\textbf{R}_\textbf{B}$ also have the same dimensions.
        For this reason, balanced strong shift equivalence is too rigid to capture one-sided conjugacy (out-splits) of the graphs.
    \item A zero row in \textbf{S} produces zero rows in \textbf{A} and \textbf{B}. 
        This corresponds to sinks in the corresponding graphs so we do not allow this in the context of this paper.
        However, it might be relevant for a similar description encompassing a broader class of graphs.
    \item In spite of its name, elementary equivalence is not an equivalence relation because it fails to be transitive, and strong shift equivalence is exactly its transitive closure.
        Similarly, balanced elementary equivalence is reflexive and symmetric but not transitive, see Example~\ref{ex:transitive},
        and we have defined balanced strong shift equivalence to be its transitivification.
    \item If \textbf{A} and \textbf{B} are balanced elementary equivalent matrices, then $\det(\textbf{A}) = \det(\textbf{B})$.
        Moreover, $\textbf{A}^{n+1} = \textbf{B}^n \textbf{A}$ and $\textbf{B}^{n+1} = \textbf{A}^n \textbf{B}$, for all $n\in \N_+$.
    \item Given a pair of graphs $E$ and $F$ with adjacency matrices \textbf{A} and \textbf{B}, respectively, we can ask whether they are balanced elementary equivalent
        via some graph with adjacency matrix \textbf{C} whose dimensions must be bounded by the dimensions of \textbf{A} and \textbf{B}.
        This puts a natural bound on the dimensions and entries of a balanced elementary equivalence $(\textbf{R}_\textbf{A}, \textbf{S}, \textbf{R}_\textbf{B})$.
        There are only finitely many choices of such matrices so this relation is decidable. 
        It would be interesting to know whether balanced strong shift equivalence is a decidable relation.
\end{itemize}

\begin{example}\label{ex:transitive}
    This example shows that the elementary balanced in-split relation is not transitive.
    Consider the graphs 
\begin{figure}[H]
    \begin{center}
    \begin{tikzpicture}
    [scale=5, ->-/.style={thick, decoration={markings, mark=at position 0.6 with {\arrow{Straight Barb[line width=0pt 1.5]}}},postaction={decorate}},
        node distance =1.5cm,
    thick,
    vertex/.style={inner sep=0pt, circle, fill=black}]
        \node (E) {$E:$};
        \node[vertex, right of = E] (E1) {.};
        \node[vertex, right of = E1] (E2) {.};
        \node[vertex, below of = E2] (E3) {.};

        \draw[->-, looseness=30, out=135, in=45] (E1) to (E1);
        \draw[->-, looseness=30, out=135, in=45] (E2) to (E2);
        \draw[->-, bend left] (E1) to (E2);
        \draw[->-, bend left] (E2) to (E3);
        \draw[->-] (E3) to (E1);
        \draw[->-, bend left] (E3) to (E2);
        \node[right of = E2] (F) {$F:$};
        \node[vertex, right of = F] (F1) {.};
        \node[vertex, right of = F1] (F2) {.};
        \node[vertex, below of = F2] (F3) {.};

        \draw[->-, looseness=30, out=135, in=45] (F1) to (F1);
        \draw[->-, looseness=30, out=135, in=45] (F2) to (F2);
        \draw[->-, bend left] (F1) to (F2);
        \draw[->-, bend left] (F2) to (F1);
        \draw[->-] (F3) to (F1);
        \draw[->-] (F3) to (F2);
        
        \node[right of = F2] (G) {$G:$};
        \node[vertex, right of = G] (G1) {.};
        \node[vertex, right of = G1] (G2) {.};
        \node[vertex, right of = G2] (G3) {.};

        \draw[->-, looseness=30, out=135, in=45] (G2) to (G2);
        \draw[->-, looseness=30, out=315, in=225] (G2) to (G2);
        \draw[->-, bend left] (G1) to (G2);
        \draw[->-, bend right] (G1) to (G2);
        \draw[->-, bend left] (G3) to (G2);
        \draw[->-, bend right] (G3) to (G2);
    \end{tikzpicture}
    \end{center}
\end{figure}
Then $E$ and $F$ are elementary balanced in-splits, and $F$ and $G$ are elementary balanced in-splits.
However, if \textbf{A} and \textbf{B} are the adjacency matrices of $E$ and $G$, respectively, then $\textbf{A}^2 \neq \textbf{B} \textbf{A}$, so $E$ and $G$ are not elementary balanced in-splits.
\end{example}

\begin{theorem}\label{thm:balanced-strong-shift-equivalence}
    Let $E$ and $F$ be finite graphs with no sinks.
    Then $E$ and $F$ are eventually conjugate if and only if there are graphs $E'$ and $F'$ which are conjugate to $E$ and $F$, respectively,
    whose adjacency matrices are balanced strong shift equivalent.
\end{theorem}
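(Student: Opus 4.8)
The plan is to prove the two implications separately, in each case reducing to a single elementary step. This uses that eventual conjugacy is an equivalence relation---it is reflexive and symmetric, and since the composite of an $\ell$-conjugacy with an $\ell'$-conjugacy is an $(\ell+\ell')$-conjugacy it is transitive---and that every conjugacy is in particular an eventual conjugacy (lag $0$). On the matrix side, balanced strong shift equivalence is by definition the transitive closure of balanced elementary equivalence, so it suffices to treat one balanced elementary equivalence at a time.

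For the forward implication, suppose $E$ and $F$ are eventually conjugate. Theorem~\ref{thm:ec-iterated-balanced} supplies a graph $G$ and $\ell$-step balanced in-split graphs $E_{(\ell)}$ and $F_{(\ell)}$ of $G$ that are conjugate to $E$ and $F$, respectively; I set $E'=E_{(\ell)}$ and $F'=F_{(\ell)}$. The proof of Theorem~\ref{thm:generated} connects $E_{(\ell)}$ and $F_{(\ell)}$ by a finite chain of \emph{elementary} balanced in-splits, each consecutive pair being elementary balanced in-splits of a common graph (possibly with sources attached). By Proposition~\ref{prop:balanced-elementary-eq} every such link yields a balanced elementary equivalence of the corresponding adjacency matrices, and passing to the transitive closure shows that $\mathbf{A}_{E'}$ and $\mathbf{A}_{F'}$ are balanced strong shift equivalent.

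For the converse it suffices, after the reductions above, to show that if $\mathbf{A}_{E'}$ and $\mathbf{A}_{F'}$ are balanced elementary equivalent via a triple $(\mathbf{R}_{\mathbf{A}},\mathbf{S},\mathbf{R}_{\mathbf{B}})$ with $\mathbf{A}_{E'}=\mathbf{S}\mathbf{R}_{\mathbf{A}}$, $\mathbf{A}_{F'}=\mathbf{S}\mathbf{R}_{\mathbf{B}}$ and $\mathbf{R}_{\mathbf{A}}\mathbf{S}=\mathbf{R}_{\mathbf{B}}\mathbf{S}=:\mathbf{C}$, then $E'$ and $F'$ are eventually conjugate. Proposition~\ref{prop:balanced-elementary-eq} already handles the case where $\mathbf{S}$ is an amalgamation matrix, so the plan is to reduce a general nonnegative $\mathbf{S}$ to this case using only conjugacies. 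Let $C$ be the graph with adjacency matrix $\mathbf{C}$, and fix a bijection of each edge of $E'$ with a composable pair consisting of an $\mathbf{S}$-edge followed by an $\mathbf{R}_{\mathbf{A}}$-edge (possible since $\mathbf{A}_{E'}=\mathbf{S}\mathbf{R}_{\mathbf{A}}$). Out-splitting $E'$ at every vertex according to the partition of its outgoing edges by their $\mathbf{S}$-factor produces a graph $\hat{E}$ that is conjugate to $E'$ and whose vertices are indexed by the $\mathbf{S}$-edges. A direct computation with the out-split rule gives $\mathbf{A}_{\hat{E}}=\mathbf{S}_\phi\hat{\mathbf{R}}_{\mathbf{A}}$ and $\hat{\mathbf{R}}_{\mathbf{A}}\mathbf{S}_\phi=\mathbf{C}$, where $\mathbf{S}_\phi$ is the amalgamation matrix recording the range map $s\mapsto r(s)$ on $\mathbf{S}$-edges; by Theorem~\ref{thm:out-in-split} this exhibits $\hat{E}$ as an in-split of $C$ with amalgamation matrix $\mathbf{S}_\phi$. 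The identical construction on $F'$ yields $\hat{F}$, conjugate to $F'$ and an in-split of the \emph{same} graph $C$ with the \emph{same} amalgamation matrix $\mathbf{S}_\phi$---here the balanced identity $\mathbf{R}_{\mathbf{A}}\mathbf{S}=\mathbf{R}_{\mathbf{B}}\mathbf{S}$ is precisely what forces both in-splits to land on $C$. Since the amalgamation matrices agree, $\hat{E}$ and $\hat{F}$ are elementary balanced in-splits of $C$, hence $1$-conjugate by Proposition~\ref{prop:I+,eventual-conjugacy}. Composing the conjugacies $E'\to\hat{E}$ and $F'\to\hat{F}$ with the eventual conjugacy $\hat{E}\to\hat{F}$ gives the desired eventual conjugacy of $E'$ and $F'$.

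The main obstacle is exactly this normalization in the converse. The definition of balanced elementary equivalence allows an arbitrary nonnegative $\mathbf{S}$, while Proposition~\ref{prop:balanced-elementary-eq} only covers amalgamation matrices; moreover a naive one-sided recoding $E'^\infty\to F'^\infty$ assembled directly from the bijection provided by $\mathbf{R}_{\mathbf{A}}\mathbf{S}=\mathbf{R}_{\mathbf{B}}\mathbf{S}$ fails to be injective---it forgets the initial $\mathbf{S}$-edge---so it is not a conjugacy and the lag cannot be read off from it directly. Resolving $\mathbf{S}$ into the amalgamation matrix $\mathbf{S}_\phi$ by a conjugacy is what converts the problem into the already-understood balanced in-split situation and is where the real content lies. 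I expect the remaining difficulty to be of a technical, degenerate-case nature: one must ensure the out-split partitions are nonempty, which amounts to discarding $\mathbf{S}$-edges whose range is a sink of $C$ and restricting to the sink-free part of $C$. Since $E'$ and $F'$ have no sinks this discards no genuine dynamics, but it must be verified that $\hat{E}$ and $\hat{F}$ remain in-splits of a common sink-free graph after this pruning.
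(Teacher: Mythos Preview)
Your forward implication is the paper's argument verbatim. For the converse you take a genuinely different route. The paper does not normalize $\mathbf{S}$ to an amalgamation matrix; instead, given a balanced elementary equivalence $(\mathbf{R}_E,\mathbf{S},\mathbf{R}_F)$ it forms an auxiliary graph containing disjoint copies of $E$, $F$ and the graph $G$ of $\mathbf{C}=\mathbf{R}_E\mathbf{S}=\mathbf{R}_F\mathbf{S}$, together with extra $\mathbf{S}$-, $\mathbf{R}_E$- and $\mathbf{R}_F$-edges. Fixing bijections between $E$-edges and $\mathbf{S},\mathbf{R}_E$-paths (and similarly for $F$ and $G$), it follows the Lind--Marcus strong shift equivalence recipe to produce a $2$-block map $E^3\to F^1$, and then---precisely to repair the non-injectivity you correctly identify---prepends the $F$-edge determined by the initial $\mathbf{S}$-edge to upgrade this to a $(1,1)$-block map $\psi\colon E^3\to F^2$. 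The inverse is built symmetrically, giving an explicit eventual conjugacy without ever passing to conjugate graphs. Your reduction instead recycles Propositions~\ref{prop:I+,eventual-conjugacy} and~\ref{prop:balanced-elementary-eq}: it makes the link to the amalgamation-matrix case conceptually transparent and shows that a general balanced elementary equivalence is, up to out-splits, already an elementary balanced in-split. The cost is the degenerate-case pruning you flag (zero columns of $\mathbf{S}$ and zero rows of $\mathbf{R}_{\mathbf{A}}$, equivalently sinks of $C$); this can indeed be handled in one pass since $\mathbf{S}$ has no zero rows (else $E'$ would have a sink), so zero rows of $\mathbf{R}_{\mathbf{A}}$ and $\mathbf{R}_{\mathbf{B}}$ coincide with sinks of $C$ and can be deleted simultaneously without altering $\mathbf{S}\mathbf{R}_{\mathbf{A}}$ or $\mathbf{S}\mathbf{R}_{\mathbf{B}}$. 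The paper's direct construction sidesteps this bookkeeping entirely.
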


\begin{proof}
    If $E$ and $F$ are eventually conjugate, then there are graphs $E'$ and $F'$ which are conjugate to $E$ and $F$, respectively,
    such that $E'$ and $F'$ are balanced in-splits of a graph $G$.
    Then the adjacency matrices of $E'$ and $F'$ are balanced strong shift equivalent by Theorem~\ref{thm:generated} and Proposition~\ref{prop:balanced-elementary-eq}.

    For the other direction, it suffices to show that if the adjacency matrices \textbf{A} and \textbf{B} of $E$ and $F$, respectively, are balanced elementary equivalent,
    then the graphs are eventually conjugate.
    We use ideas of~\cite[Section 7.2]{LM}.
    Let $(\textbf{R}_E, \textbf{S}, \textbf{R}_F)$ be a balanced elementary equivalence from \textbf{A} to \textbf{B}.
    Then $\textbf{R}_E \textbf{S} = \textbf{R}_F$ is a nonnegative integer matrix so it defines a finite graph $G$.
    
    We construct a new graph as follows.
    Start by considering the disjoint union of the graphs $E$, $F$ and $G$.
    The edges in $E$ are called \emph{$E$-edges} and, similarly, the edges in $F$ and $G$ are called \emph{$F$-edges} and \emph{$G$-edges}, respectively.
    For each vertex $k\in G^0$ and $i\in E^0$ we add $\textbf{R}_E(k, i)$ edges from $k$ to $i$ called $\textbf{R}_E$-edges,
    and for each vertex $k\in G^0$ and $j\in F^0$ we add $\textbf{R}_F(k, j)$ edges from $k$ to $j$ called $\textbf{R}_F$-edges.
    Similarly, for vertices $i\in E^0$, $j\in F^0$ and $k\in G^0$, we add $\textbf{S}(i,k)$ edges from $i$ to $k$ and $\textbf{S}(j,k)$ edges from $j$ to $k$ called \textbf{S}-edges.
    This finishes the construction of the graph.
    An \textbf{S},$\textbf{R}_E$-path is an \textbf{S}-edge followed by an $\textbf{R}_E$-edge forming a path in the graph.
    We define $\textbf{S}, \textbf{R}_F$-paths, $\textbf{R}_E, \textbf{S}$-paths and $\textbf{R}_F, \textbf{S}$-paths analogously.

    Observe that we may identify the vertices of $E$ and $F$, and if $i\in E^0$ is paired with $j\in F^0$ then $\textbf{S}(i,k) = \textbf{S}(j,k)$ for all $k\in G^0$.
    Moreover, since $\textbf{A} = \textbf{S} \textbf{R}_E$, there is a bijection between $E$-edges and $\textbf{S}, \textbf{R}_E$-paths from vertices $i$ to $i'$ in $E^0$,
    and since $\textbf{B} = \textbf{S} \textbf{R}_F$ there is a bijection between $F$-edges and $\textbf{S}, \textbf{R}_E$-edges from vertices $j$ to $j'$ in $F^0$.
    Similarly, the relation $\textbf{R}_E \textbf{S} = \textbf{R}_F \textbf{S}$ defines a bijection between edges in $G$ and $\textbf{R}_E, \textbf{S}$-paths,
    and $\textbf{R}_E, \textbf{S}$-paths, respectively, from vertices $k$ to $k'$ in $G^0$.
    Fix such bijections.

    A $2$-path in $E$ now corresponds to an $\textbf{S}, \textbf{R}_E$-path followed by another $\textbf{S}, \textbf{R}_E$-path,
    and the middle two edges of these four edges are an $\textbf{R}_E, \textbf{S}$-path which in turn corresponds to a single edge in $G$.
    As explained in~\cite[Section 7.2]{LM} this defines a $2$-block map from $E$ to $G$.
    Similarly, there is a $2$-block map from $G$ to $F$.
    Extending the first map and composing it with the second, we obtain a map $E^3\LRA F^1$ which is compatible with $E$ and $F$.
    Moreover, if $x_0 x_1 x_2\in E^3$ is mapped to an edge $y\in F^1$ via this map and $i = s(x_0)$, 
    then the \textbf{S}-edge emitted from $i\in E^0$ (with some range $k\in G^0$) corresponds to a paired \textbf{S}-edge emitted from the vertex $j\in F^0$ paired with $i$ (also landing in $k\in G^0$).
    Since there is an $\textbf{R}_F$-edge emitted from $k\in G^0$ and landing in $s(y)\in F^0$, this defines an edge from $j$ to $s(y)$, say $y_0$.
    Therefore, we actually obtain a $(1,1)$-block map $\psi\colon E^3\LRA F^2$ sending $x_0 x_1 x_2\in E^3$ to $y_0 y\in F^2$.

    The block map $\psi$ extends to a $(1,1)$-sliding block code $h\colon E^\infty\LRA F^\infty$ on the path spaces.
    An inverse is constructed by applying the same procedure from $F$ to $E$, so $h$ is a homeomorphism.
    This shows that $E$ and $F$ are eventually conjugate.
\end{proof}

\end{document}